\documentclass[11pt,a4paper]{article}

\usepackage{amsmath, amssymb, amsthm}
\usepackage{geometry}
\usepackage{color}
\usepackage[numbers,sort,compress]{natbib}

\newtheorem{theorem}{Theorem}[section]
\newtheorem{lemma}[theorem]{Lemma}
\newtheorem{corollary}[theorem]{Corollary}
\newtheorem{proposition}[theorem]{Proposition}

\theoremstyle{definition}
\newtheorem{definition}[theorem]{Definition}
\newtheorem{remark}[theorem]{Remark}
\newtheorem{example}[theorem]{Example}

\newcommand{\F}{\mathbb{F}}
\newcommand{\N}{\mathcal{N}}

\newcommand{\sfp}{\operatorname{sfp}}
\newcommand{\Res}{\operatorname{Res}}

\newcommand{\col}{\operatorname{col}}
\newcommand{\rad}{\operatorname{rad}}
\newcommand{\diag}{\operatorname{diag}}
\newcommand{\sqrtp}{\operatorname{sqrt}}
\newcommand{\nullity}{\operatorname{nullity}}
\newcommand{\tr}{\operatorname{tr}}
\newcommand{\adj}{\operatorname{adj}}

\title{Factorization of invariant polynomials and generalized spectral characterizations of graphs}
\author{Wei Wang$^{{\rm a}}$\thanks{Corresponding author.\\	
		\hspace*{1.8em}E-mail addresses: wangwei.math@gmail.com (W.~Wang), tangquanyu827@gmail.com (Q.~Tang),\\
		wang\_weiw@xjtu.edu.cn (W.~Wang).} \quad Quanyu Tang$^{{\rm b}}$ \quad Wei Wang$^{{\rm b}}$\\
{\footnotesize$^{\rm a}$School of Mathematics, Physics and Finance, Anhui Polytechnic University, Wuhu 241000, P. R. China}\\
{\footnotesize$^{\rm b}$School of Mathematics and Statistics, Xi'an Jiaotong University, Xi'an 710049, P. R. China}
}
\date{}
\begin{document}
		\maketitle
\begin{abstract}
	The problem of characterizing graphs by their generalized spectra has received significant attention in recent years. This paper provides a complete proof of a conjecture proposed by Wang, Wang, and Zhu (European J. Combin., 2023), which asserts that the square-root polynomial of the invariant polynomial $\Phi_p(G;x) \in \F_p[x]$ can replace its square-free part to yield a more effective criterion for a graph to be determined by its generalized spectrum (DGS). A key ingredient of our proof is a novel algebraic factorization: we show that the polynomial $\Phi_p(G;x)$ is the product of the characteristic polynomials of the adjacency operator restricted to the left null space of the walk matrix and its radical, respectively. Based on this refined DGS-criterion, a broad family of DGS-graphs is constructed via rooted products, significantly generalizing the recent result of Wang, Shen, and Mao (Discrete Appl. Math., 2026).
	\end{abstract}
	
	\noindent \textbf{Keywords:} generalized spectrum, walk matrix, determined by generalized spectrum, rooted product graphs.

	\section{Introduction}
	
	Let $G$ be a simple graph on $n$ vertices with adjacency matrix $A$. We often identify a graph $G$ with its adjacency matrix $A$.  The \emph{spectrum}
	of a graph $G$, denoted by $\sigma(G)$, is the multiset of eigenvalues of $A$. The \emph{generalized
	spectrum} of $G$ is the ordered pair $(\sigma(G), \sigma(\overline{G}))$, where $\overline{G}$ is the complement of $G$. Two
	graphs are \emph{generalized cospectral} if they share the same generalized spectrum; a graph
	$G$ is \emph{determined by its generalized spectrum} (DGS) if any graph generalized cospectral with $G$ is isomorphic to $G$.
	
	The \emph{walk matrix} of $G$ is defined as 
	\begin{equation*}
	W =W(G)= [e, Ae, \dots, A^{n-1}e], 
	\end{equation*}where $e$ is the all-ones vector. This kind of matrices has attracted increasing attention in recent years, as many important properties of graphs, including the DGS-property, are closely related to the corresponding matrices; see, e.g. \cite{godsil1981JCTB,liu2022JAC,guo2025AAM,wang2023EJC,rowlinson2007AADM}.
	
	It is known \cite{wang2013EJC} that for an $n$-vertex graph $G$, the determinant of $W(G)$ is always a multiple of $2^{\lfloor n/2\rfloor}$, i.e.,  $2^{-\lfloor n/2\rfloor}\det W(G)\in \mathbb{Z}$. An integer $b$ is called \emph{square-free} if $p^2\nmid b$ for any prime $p$. In particular, the number $\pm 1$ is square-free, but 0 is not.  Wang \cite{wang2013EJC,wang2017JCTB} gave the following arithmetic criterion for a graph to be DGS.
	\begin{theorem}[\cite{wang2013EJC,wang2017JCTB}]\label{wt}
	Let $G$ be an $n$-vertex graph.	If $2^{-\lfloor n/2\rfloor}\det W(G)$ is odd and square-free, then $G$ is DGS.
	\end{theorem}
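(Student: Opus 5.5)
The plan follows Wang's original strategy, which has three stages. The first stage is a reduction to rational orthogonal matrices. By the classical result of Johnson and Newman, $G$ and $H$ are generalized cospectral if and only if there is an orthogonal matrix $Q$ with $Qe=e$ and $Q^TA(G)Q=A(H)$. When $\det W(G)\neq 0$ this $Q$ is unique and rational, namely $Q=W(G)W(H)^{-1}$; the hypothesis forces $\det W(G)\neq0$, since $0$ is not square-free. Let $\ell(Q)$ be the \emph{level} of $Q$, the smallest positive integer $\ell$ with $\ell Q$ integral. If $\ell(Q)=1$, then $Q$ is an integral orthogonal matrix fixing $e$, hence a permutation matrix, and $H\cong G$. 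So it suffices to show $\ell(Q)=1$, that is, that no prime $p$ divides $\ell(Q)$.

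The second stage controls the level by $\det W$. Since $W(G)^TQ=W(H)^T$, the matrix $\ell Q$ is integral and $W(G)^T(\ell Q)\equiv 0$ modulo $\ell$. Passing to the Smith normal form of $W$ shows that $\ell$ divides the last invariant factor $d_n$ of $W(G)$. Hence any prime $p\mid \ell$ divides $\det W$, so $p\neq 2$ is impossible unless $p \mid \det W$. From square-freeness, for odd $p$ we get $p\,\|\,\det W$, so $\operatorname{rank}_p W = n-1$. For $p=2$ the hypothesis says $\det W = 2^{\lfloor n/2\rfloor}\cdot(\text{odd})$. I will need the known fact that the Smith form of $W$ over $\mathbb{Z}_{(2)}$ is then exactly $\operatorname{diag}(1,\dots,1,2,\dots,2)$ with $\lceil n/2\rceil$ ones, so $2^2\nmid d_n$.

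The third stage excludes odd primes. Suppose $p\mid\ell$ with $p$ odd. Take a column $u$ of $\ell Q$ that is nonzero mod $p$. It satisfies $W^Tu\equiv 0$, and since $\operatorname{rank}_pW=n-1$ the vector $u$ spans the left kernel. From $Q^TAQ=A(H)$ we get $A(\ell Q)\equiv (\ell Q)A(H)$, so the kernel is $A$-invariant and $u$ is an eigenvector: $Au\equiv\lambda u$. From $Q^TQ=I$ we get $u^Tu\equiv 0\pmod p$, and from $Qe=e$ we get $e^Tu\equiv0$. I will then lift to modulo $p^2$. Because $p^2\nmid\det W$, the equation $W^Tz\equiv 0 \pmod{p^2}$ has only multiples of $p$ plus lifts of $u$. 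Combining this with $u^Tu\equiv0\pmod{p^2}$, which follows from the orthogonality of $\ell Q$ being $\ell^2 I$, together with the fact that $\lambda$ is a root of the characteristic polynomial, I expect to derive a contradiction. This is exactly Wang's lemma: if $p\,\|\det W$ then $p\nmid\ell(Q)$.

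Excluding $p=2$ is the main obstacle. Here $2$ genuinely divides $\det W$, and the parity hypothesis is what is used. The fact to establish is that for any graph, $e^TA^ke$ is even for $k\ge1$. More strongly, $W^TW$ has a specific $2$-adic structure: $x^TAx\equiv 0 \pmod 2$ modulo diagonal terms, so $(A^ie)^T(A^je)$ depends only on $i+j$ and its parity behaves well. Using this, one shows that a column $u$ of $\ell Q$ with $u\not\equiv0\pmod2$ and $u^Tu\equiv0\pmod 4$ yields an element $x$ with $x^T x\equiv 0\pmod 8$ in a space of constrained form, contradicting the oddness of $2^{-\lfloor n/2\rfloor}\det W$. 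I expect this step to be the delicate quadratic-form argument over $\mathbb{Z}/8$ of \cite{wang2017JCTB}, and I would follow that paper, treating it as the hardest step.
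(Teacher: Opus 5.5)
Your Stages 1 and 2 are fine. However, the two steps that carry all the content of the theorem are not proved, and the odd-prime step cannot be completed from the ingredients you list.

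\textbf{The odd-prime step.} The remark about $W^\top z\equiv 0\pmod{p^2}$ gives no leverage. Since $v_p(d_n)=1$, every solution is $\equiv 0\pmod p$. Meanwhile $u$ itself satisfies $W^\top u\equiv 0$ only modulo $p$, because $\ell\mid d_n$ forces $v_p(\ell)=1$, and $W^\top u$ is $\ell$ times a row of $W(H)$. More seriously, the facts you plan to combine ($W^\top u\equiv 0$, $Au\equiv\lambda u\pmod p$, $u^\top u\equiv 0\pmod{p^2}$, $\chi(G;\lambda)\equiv 0$) are conditions on $G$ and $u$ alone, and graphs satisfying the hypothesis realize them. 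Whenever the line $\N(W^\top)\subset\F_p^n$ is isotropic (equivalently $\Phi_p(G;x)=(x-\lambda)^2$, by Proposition~\ref{bf}), a spanning vector can be corrected by a multiple of $p$ to get $u^\top u\equiv 0\pmod{p^2}$. The $8$-vertex graph in the paper's example, with $\det W=-2^4\cdot 3$ and $\Phi_3(G;x)=(x+1)^2$, is exactly such a case. So no contradiction can come out; you must import information from $H$.

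The missing ingredient is $u^\top Au\equiv 0\pmod{p^2}$, which comes from $(\ell Q)^\top A(\ell Q)=\ell^2A(H)$. With it the step closes as follows.
\begin{itemize}
\item Isotropy gives $\bar u\in\N(W^\top)^\perp=\col W$, so $u\equiv h(A)e\pmod p$ with $(x-\lambda)h\equiv c\,m_p$ and $c\not\equiv 0$.
\item Expanding $u^\top(A-\lambda)h(A)e$ in two ways yields $c\,u^\top m(A)e\equiv u^\top Au-\lambda u^\top u\equiv 0\pmod{p^2}$, where $m$ is a monic lift of $m_p$.
\item When $\nullity_pW=1$, replacing the last column of $W$ by $m(A)e/p$ shows that $p\,\|\,\det W$ is equivalent to $u^\top m(A)e\not\equiv 0\pmod{p^2}$. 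This gives the contradiction.
\end{itemize}

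\textbf{The case $p=2$.} Here you give no argument at all, only a pointer to the $\mathbb{Z}/8$ quadratic-form analysis of \cite{wang2017JCTB} and a sketch that cannot be checked. So this step is also a gap.

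The paper's machinery removes the need for both orthogonality arguments. For any prime $p\mid\det W$, the reduction of $f_p$, namely $\chi/\sqrtp\Phi_p$, annihilates $e$ modulo $p$ for both $G$ and $H$; this follows from Corollary~\ref{em} together with the invariance of $\Phi_p$ under generalized cospectrality. Hence $Q^\top W_{(p)}(G)=W_{(p)}(H)$ with both sides integral. Theorem~\ref{gmain} then gives $v_p(\ell)\le v_p(\det W_{(p)}(G))=v_p(\det W)-\deg\sqrtp\Phi_p(G;x)$.

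Under the hypothesis of Theorem~\ref{wt}, this bound is $0$ for every prime.
\begin{itemize}
\item For odd $p$: $\Phi_p\neq 1$, since $\deg m_p<n$ and Corollary~\ref{em} applies. Together with Corollary~\ref{sqrtmid} this forces $\deg\sqrtp\Phi_p=1=v_p(\det W)$.
\item For $p=2$: the proof of Theorem~\ref{samepoly} gives $\sqrtp\Phi_2(G;x)=g(x)$, of degree $\lfloor n/2\rfloor=v_2(\det W)$.
\end{itemize}
In proving Theorem~\ref{main} the paper cites \cite{wang2023EJC} for $2\nmid\ell$, but here Theorem~\ref{gmain} with $p=2$ gives it directly. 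The annihilating-polynomial (reduced walk matrix) idea thus replaces both of your missing steps with a single determinant count.
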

	The original proof of Theorem \ref{wt} was somewhat involved; however, a considerably simpler proof was subsequently developed by Qiu et al.~\cite{qiu2023DM}, based on Smith normal forms. Motivated by the argument in \cite{qiu2023DM}, Wang et al.~\cite{wang2023EJC} considered a larger family of graphs and obtained an improvement of Theorem \ref{wt}.  
	
	Recall that for a non-singular integer matrix $M$ of order $n$, there exist unimodular matrices $U,V$ such that $UMV=\diag[d_1,\ldots,d_n]$, where $d_i$'s are positive integers with $d_1\mid d_2\cdots\mid d_n$. The diagonal matrix $\diag(d_1,\ldots,d_n)$ is called the \emph{Smith normal form} of $M$; the $i$-th diagonal entry $d_i$ is called the $i$-th \emph{invariant factor} of $M$. We note that the condition of Theorem \ref{wt} can be restated using the Smith normal forms.
	
		\begin{theorem}[\cite{wang2013EJC,wang2017JCTB}]\label{wt2}
		Let $G$ be an $n$-vertex graph.	If $W(G)$ has the Smith normal form
		\begin{equation*}
			\diag[\underbrace{1,\ldots,1}_{\lceil\frac{n}{2}\rceil},\underbrace{2,\ldots,2,2b}_{\lfloor\frac{n}{2}\rfloor}],
		\end{equation*}
		 \text{where $b$ is odd and square-free}, then $G$ is DGS.
	\end{theorem}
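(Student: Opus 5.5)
The plan is to reduce Theorem \ref{wt2} to Theorem \ref{wt}, which we take as given; all that is needed is that the Smith normal form hypothesis implies the arithmetic hypothesis. The proof is a short computation with invariant factors, followed by an appeal to the earlier theorem.

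First, unimodular matrices have determinant $\pm1$. So if $UW(G)V=\diag[d_1,\ldots,d_n]$, then $|\det W(G)|=\prod_i d_i$. Under the stated form this product is
\begin{equation*}
1^{\lceil n/2\rceil}\cdot 2^{\lfloor n/2\rfloor-1}\cdot 2b=2^{\lfloor n/2\rfloor}b .
\end{equation*}
This count assumes $\lfloor n/2\rfloor\ge 1$, so that the block $2,\ldots,2,2b$ is nonempty. Hence $2^{-\lfloor n/2\rfloor}\det W(G)=\pm b$.

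Next, $b$ is odd and square-free by hypothesis, so $\pm b$ is odd and square-free as well; recall that $\pm1$ counts as square-free. Theorem \ref{wt} then shows that $G$ is DGS.

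The degenerate case $n=1$ has to be handled separately, since then $\lfloor n/2\rfloor=0$ and the displayed form has no entry $2b$. There is nothing to prove here: the single-vertex graph is trivially DGS.

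The only step needing care is this bookkeeping about the determinant. The main point is that $\det W(G)$ determines the invariant factors only up to sign, and the criterion in Theorem \ref{wt} is insensitive to that sign. Conversely, one could note that the Smith normal form condition is equivalent to the hypothesis of Theorem \ref{wt} by the known divisibility results of \cite{wang2013EJC}, but the proof only needs the forward direction.
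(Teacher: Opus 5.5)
Your proposal is correct and follows the paper's route. The paper gives no separate argument; it only notes that Theorem~\ref{wt2} restates Theorem~\ref{wt} in terms of Smith normal forms, and your computation $|\det W(G)|=\prod_i d_i=2^{\lfloor n/2\rfloor}b$ supplies exactly the bookkeeping that makes this precise (the case $n=1$ is harmless, since there $\det W=1$ and Theorem~\ref{wt} applies directly).
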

	The following polynomial $\Phi_p(G;x)$ introduced in \cite{wang2023EJC} is crucial for their improvement. They only considered the case $p$ is odd. Here we allow $p=2$.  For a square matrix $M$, we use $\chi(M;x)$ to denote the characteristic polynomial of $M$, i.e., $\chi(M;x)=\det(xI-M)$. We use $J$ to denote the all-ones matrix.
\begin{definition}[\cite{wang2023EJC}]\label{Phip}\normalfont
	Let $p$ be a prime and $G$ be a graph with adjacency matrix $A$. We define	
	\begin{equation*}\Phi_p(G;x)=\gcd(\chi(A;x),\chi(A+J;x))\in \mathbb{F}_p[x],
	\end{equation*}
	where  the greatest common divisor (gcd) is taken over the field $\mathbb{F}_p=\mathbb{Z}/p\mathbb{Z}$.
\end{definition}
\begin{remark}
	The polynomial $\Phi_p(G;x)$ is invariant under generalized cospectrality. That is, if $G$ and $H$ are generalized cospectral, then $\Phi_p(G;x)=\Phi_p(H;x)$ for any prime $p$.
\end{remark}
Let $f$ be a monic polynomial in $\mathbb{F}_p[x]$ and $f = \prod_{1\le i\le r}f_i^{e_i}$
be the irreducible factorization of $f$, with distinct monic irreducible polynomials $f_1,f_2,\ldots, f_r$ and positive integers $e_1,e_2,\ldots, e_r$. The \emph{square-free part} \cite[p.~394]{gathen} of $f$, denoted by $\sfp (f)$,  is $\prod_{1\le i\le r}f_i$; and the \emph{square root} of $f$, denoted by $\sqrtp(f)$, is $\prod_{1\le i\le r}f_i^{\lceil\frac{e_i}{2}\rceil}$. For an integer matrix $M$ and a prime $p$, we use $\nullity_p M$ to denote the nullity of $M$ over $\F_p$.

	\begin{theorem}[\cite{wang2023EJC}]\label{main0}
		Let $G$ be an $n$-vertex graph and $d_n$ be the last invariant factor of $W=W(G)$. Suppose that $d_n$ is square-free. If for each odd prime factor $p$ of $d_n$,
		\begin{equation}\label{keyequ1}
			\deg\sfp \Phi_p(G;x)= \nullity_p W,
		\end{equation} then $G$ is DGS.
	\end{theorem}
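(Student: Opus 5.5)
The plan is to follow the Smith-normal-form strategy of Qiu et al.\ and Wang et al. First reduce to the standard set-up. If $H$ is generalized cospectral with $G$, then $W(H)$ is nonsingular because $W(G)$ is, and there is a unique regular rational orthogonal matrix $Q$ ($Q^TQ=I$, $Qe=e$) with $Q^TA(G)Q=A(H)$. Moreover $Q=W(G)W(H)^{-1}$. It therefore suffices to show that every such $Q$ is a permutation matrix.

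Define the \emph{level} $\ell$ of $Q$ as the least positive integer with $\ell Q$ integral. Since $Q^T=W(H)W(G)^{-1}$, we get $\ell\mid d_n$. A regular rational orthogonal matrix with $\ell=1$ is a permutation matrix, so it is enough to show $p\nmid \ell$ for every prime $p\mid d_n$. The prime $p=2$ should be handled exactly as in the proof of Theorem \ref{wt}, via Wang's arguments in \cite{wang2013EJC,wang2017JCTB}. In Theorem \ref{main0} only odd primes carry the new hypothesis, so I will assume that the $2$-part is already covered there, since $d_n$ square-free forces the relevant $2$-adic structure. The main work is the case of an odd prime $p$ with $p\mid \ell$.

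For such $p$, take $\hat Q=\ell Q$. Then $\hat Q\not\equiv 0 \pmod p$, $\hat Q^T\hat Q\equiv 0$, and $A\hat Q=\hat Q A(H)$. Since $d_n$ is square-free, $p^2\nmid \ell$, which gives $\hat Q^TW(G)\equiv 0\pmod p$. So each column $z$ of $\hat Q$ satisfies $z^TW\equiv 0$ and $z^Tz\equiv0 \pmod p$. Let $N=\{z\in\F_p^n: z^TW=0\}$ be the left null space of $W$ over $\F_p$; it is $A$-invariant and $\dim N=\nullity_p W$. Every $z\in N$ satisfies $z^Te=0$, so on $N$ the matrices $A$ and $A+J$ act identically. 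Hence $\chi(A|_N;x)$ divides both $\chi(A;x)$ and $\chi(A+J;x)$, and therefore divides $\Phi_p(G;x)$. The hypothesis \eqref{keyequ1} then makes $\deg\chi(A|_N)=\deg\sfp\Phi_p$, which should force $A|_N$ to have a square-free characteristic polynomial. Since $N$ is a cyclic-type module whose characteristic factors are distinct, $A|_N$ is cyclic and $N$ decomposes as $\bigoplus N_i$, one irreducible piece for each irreducible factor $f_i$ of $\Phi_p$.

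The crux is to show that a nonzero isotropic, $A$-invariant-generated subspace cannot exist under these conditions. Take the column space $U$ of $\hat Q$ mod $p$. It is $A$-invariant, because $A\hat Q=\hat Q A(H)$, it lies in $N$, and it is totally isotropic, because $\hat Q^T\hat Q\equiv0$. So $U$ contains an $A$-irreducible piece $N_i$ that is self-orthogonal. I then aim for a contradiction: for a symmetric $A$, the pairing between the $f_i$-primary component and the $f_i^*$-primary component of $\F_p^n$ is nondegenerate, where $f_i^*$ is the reciprocal/adjoint factor, here $f_i$ itself since $A$ is symmetric. Self-orthogonality of $N_i$ then forces $f_i^2\mid\chi(A)$, with the $f_i$-primary part of $\F_p^n$ strictly larger than $N_i$.

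I expect the main obstacle to be converting this into $f_i^2\mid\chi(A+J)$ as well, so that the multiplicity of $f_i$ in $\Phi_p$ contributes beyond the square-free count. The resulting count of $\dim N$ against $\deg\sfp\Phi_p$ must then contradict \eqref{keyequ1}. My first attempt would use the rank-one perturbation $J=ee^T$: the $f_i$-primary components of $A$ and $A+J$ agree on the subspace orthogonal to $e$, which contains $N$. This contradiction shows $p\nmid\ell$ for each odd $p\mid d_n$. Hence $\ell=1$, $Q$ is a permutation matrix, and $G$ is DGS.
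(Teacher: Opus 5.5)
Your set-up is sound. The columns of $\hat Q=\ell Q$ do span a nonzero, $A$-invariant, totally isotropic subspace $U\subseteq\N(W^\top)$ over $\F_p$, and $\chi(A|_{\N(W^\top)};x)$ does divide $\Phi_p(G;x)$. The gap is the crux itself: you want to show that such a $U$ cannot coexist with \eqref{keyequ1}, but it can. The $8$-vertex graph $G$ in the paper's Example has $d_n=6$, $\nullity_3 W=1$ and $\Phi_3(G;x)=(x+1)^2$, so \eqref{keyequ1} holds. Since $\deg\Phi_3=2\nullity_3W$, Lemma~\ref{equiv3} says $\N(W^\top)$ is itself a nonzero $A$-invariant totally isotropic line. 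Your planned endpoint also fails. The facts $f_i^2\mid\chi(A)$ and $f_i^2\mid\chi(A+J)$ (both true in that example) give $f_i^2\mid\Phi_p$, which contradicts nothing, because $\sfp\Phi_p$ ignores multiplicities. So no argument that uses only the mod-$p$ data ``$U\subseteq\N(W^\top)$, $A$-invariant, isotropic'' can finish. You need information about $Q$ beyond its reduction mod $p$.

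The missing ingredient is an annihilating polynomial for $e$ of degree $n-\nullity_pW$ built from $\Phi_p$, combined with integrality. In the paper this comes from the factorization $\Phi_p=\chi(A|_{\N(W^\top)})\,\chi(A|_{\rad\N(W^\top)})$ (Proposition~\ref{bf}). It gives $\sqrtp\Phi_p\mid\chi(A|_{\N(W^\top)})$ and $m_p\mid\chi/\sqrtp\Phi_p$ (Corollaries~\ref{sqrtmid} and~\ref{em}).

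The paper obtains the statement as a consequence of Theorem~\ref{main}. Under \eqref{keyequ1} the chain $\deg\sfp\Phi_p\le\deg\sqrtp\Phi_p\le\nullity_pW$ collapses, so \eqref{keyequ2} holds. Concretely, lift $\chi/\sqrtp\Phi_p$ to $f_p$; then $f_p(A)e\equiv 0\pmod p$. Dividing the last $\deg\sqrtp\Phi_p$ columns of the column-reduced walk matrix by $p$ gives an integral $W_{(p)}$ with $Q^\top W_{(p)}(G)=W_{(p)}(H)$ and $v_p(\det W_{(p)})=\nullity_pW-\deg\sqrtp\Phi_p=0$, hence $p\nmid\ell$.

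Two further points. Your step ``degree equality forces $\chi(A|_{\N(W^\top)})$ to be square-free'' does not follow from $\chi(A|_{\N(W^\top)})\mid\Phi_p$ alone. Nothing rules out $\chi(A|_{\N(W^\top)})=f^2$ when $\Phi_p=f^2g$ with $\deg g=\deg f$. It needs exactly the divisibility $\sfp\Phi_p\mid\chi(A|_{\N(W^\top)})$ that the radical factorization supplies. Your treatment of $p=2$ is acceptable only as a citation of the known fact that $d_n\equiv 2\pmod 4$ forces $\ell$ to be odd, not as a consequence of Theorem~\ref{wt} itself.
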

	They conjectured that the polynomial $\sfp \Phi_p(G;x)$  can be replaced by $\sqrtp \Phi_p(G;x)$. 	The main aim of this paper is to verify this conjecture, which we state as the following theorem
	\begin{theorem}\label{main}
	Let $G$ be an $n$-vertex graph and $d_n$ be the last invariant factor of $W=W(G)$. Suppose that $d_n$ is square-free. If for each odd prime factor $p$ of $d_n$,
	\begin{equation}\label{keyequ2}
		\deg\sqrtp \Phi_p(G;x)= \nullity_p W,
	\end{equation} then $G$ is DGS.
	\end{theorem}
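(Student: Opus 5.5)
We reduce Theorem \ref{main} to the framework behind Theorem \ref{main0}. In \cite{wang2023EJC}, the proof of Theorem \ref{main0} uses condition \eqref{keyequ1} at only one point: to show that for each odd prime $p\mid d_n$ there is no nonzero vector $z\in\F_p^n$ with $z^TW=0$ and $z^Tz=0$. Equivalently, the left null space $\mathcal{N}=\{z: z^TW\equiv 0\pmod p\}$ is anisotropic with respect to the standard bilinear form. Granting this, the usual argument goes through unchanged: any rational orthogonal $Q$ with $Qe=e$ and $Q^TA(G)Q=A(H)$ has level $\ell$ dividing $d_n$, and this forces $\ell=1$. The prime $p=2$ is handled as in the original setting. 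The plan is therefore to prove that \eqref{keyequ2} also forces $\mathcal{N}$ to be anisotropic, or more precisely that it gives the weaker property actually used at that point, namely $\mathcal{N}\cap\mathcal{N}^{\perp}=0$.

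\textbf{Step 1 (factorization).} Since $A$ is symmetric, $\mathcal{N}$ is $A$-invariant: if $z^TA^ke=0$ for all $k$, then $(Az)^TA^ke=z^TA^{k+1}e=0$. The radical $R=\mathcal{N}\cap\mathcal{N}^{\perp}$ is also $A$-invariant. We aim to prove
\[
\Phi_p(G;x)=\chi(A|_{\mathcal{N}};x)\,\chi(A|_{R};x).
\]
To do this, work with $\mathcal{N}$, with the column space $\col W$ (the cyclic space generated by $e$), and with $\col W^{\perp}=\mathcal{N}$. First compute $\gcd(\chi(A),\chi(A+J))$ by choosing a basis adapted to the flag $R\subseteq \mathcal{N}$ and its complement. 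The operator $A+J=A+ee^T$ agrees with $A$ on $\mathcal{N}$, because $J$ kills $\mathcal{N}$. On the quotient $\F_p^n/\mathcal{N}$, the image of $e$ is cyclic. Moreover, $\F_p^n/\mathcal{N}\cong(\col W)^*$, and $R$ corresponds to $\col W\cap\mathcal{N}$.

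The contribution of $\chi(A|_{\mathcal{N}})$ appears in both characteristic polynomials. On the quotient, the two polynomials are the characteristic polynomials of $A$ and of $A+ee^T$ acting on a cyclic module. These are the "minimal polynomial" $m(x)$ and the polynomial $m(x)+(\text{stuff})$; their gcd is exactly the part coming from the piece of $\col W$ lying inside $\mathcal{N}$, namely $\chi(A|_R)$. The difference of the two characteristic polynomials on this cyclic part is computed by the matrix determinant lemma: $\chi(A+J)=\chi(A)-e^T\adj(xI-A)e$. We use the walk-generating-function form of this identity, then compare resultants and coprimality to get the second factor.

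\textbf{Step 2 (degree count).} Since $R\subseteq\mathcal{N}$, the polynomial $\chi(A|_R)$ divides $\chi(A|_{\mathcal{N}})$. Hence every irreducible factor $f$ of $\Phi_p$ has exponent $a+b$ with $b\le a$, where $a$ and $b$ are its multiplicities in $\chi(A|_{\mathcal{N}})$ and $\chi(A|_R)$. Then $\lceil (a+b)/2\rceil\le a$, so
\[
\deg\sqrtp\Phi_p\le\deg\chi(A|_{\mathcal{N}})=\nullity_pW.
\]
Equality holds if and only if $b=0$ for every $f$ (given $b\le a$, equality $\lceil (a+b)/2\rceil=a$ forces $b\ge a-1$; this case needs care). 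Cleaner: equality forces $\deg\chi(A|_R)\le$ the number of factors with $a=b+1$. The goal is to conclude $R=0$, using additionally that $d_n$ is square-free, which bounds the structure of $R$.

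\textbf{Main obstacle.} The hardest parts are Step 1, the exact factorization, and the final deduction $R=0$ in Step 2. If the degree argument leaves a residual case $a=b+1$, I would first try to exclude it using the square-freeness of $d_n$. Over $\mathbb{Z}_p$ this means that $\mathcal{N}$ lifts with $W^TZ\equiv0\pmod{p}$ but not $\pmod{p^2}$, and the Gram matrix $Z^TZ$ then controls $R$. Alternatively, I would check whether the argument of \cite{wang2023EJC} only needs $R$ to be small rather than zero.
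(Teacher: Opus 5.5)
There is a genuine gap: the property you set out to prove in Step 2 is false under the hypotheses. Let $a_f,b_f$ be the multiplicities of an irreducible $f$ in $\chi(A|_{\mathcal N};x)$ and $\chi(A|_R;x)$, so $b_f\le a_f$. Equality $\deg\sqrtp\Phi_p=\deg\chi(A|_{\mathcal N};x)$ requires $\lceil (a_f+b_f)/2\rceil=a_f$, i.e.\ $b_f\ge a_f-1$, for every $f$. Hence \eqref{keyequ2} gives $\dim\mathcal N-\dim R=\sum_{a_f=b_f+1}\deg f$. That bounds the \emph{codimension} of $R$ in $\mathcal N$, not $\dim R$: it forces $R$ to be almost all of $\mathcal N$, not zero, so your ``cleaner'' inequality points the wrong way.

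The paper's own example shows this concretely. For the $8$-vertex graph $G$, $\det W=-2^4\cdot 3$, so $\nullity_3 W=1$, and $\Phi_3=(x+1)^2$. By the factorization, $\dim R=\deg\Phi_3-\dim\mathcal N=1$, so $R=\mathcal N$ and the left null vector is isotropic. For $G_4$ one has $\Phi_3=(1+x)^8$ and $\nullity_3W=4$, so $\mathcal N$ is totally isotropic. By Lemma~\ref{equiv3}, $\mathcal N=\rad\mathcal N$ holds for every graph in $\mathcal G$. All these graphs have square-free $d_n$, so no use of square-freeness can yield $R=0$. The same $8$-vertex graph also satisfies \eqref{keyequ1}, so your premise that the proof of Theorem~\ref{main0} extracts anisotropy of $\mathcal N$ from \eqref{keyequ1} cannot be right either.

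A secondary point concerns Step 1. The image of $e$ in $\F_p^n/\mathcal N$ generates only $(\col W+\mathcal N)/\mathcal N$, which is the whole quotient only when $R=0$. So your sketch of the factorization implicitly assumes exactly what fails here. The identity itself is true; the paper proves it by showing that $\rad(\col W)$ is the cyclic space generated by $q(A)e$, where $q$ is the reduced denominator of $e^\top(xI-A)^{-1}e$ and $\Phi_p=\chi/q$.

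The missing idea is to use the factorization to produce an annihilating polynomial, not to control isotropy.
\begin{enumerate}
\item Since $\chi(A|_R)\mid\chi(A|_{\mathcal N})$, one gets $\sqrtp\Phi_p\mid\chi(A|_{\mathcal N};x)$.
\item Since $m_p=\chi(A;x)/\chi(A|_{\mathcal N};x)$, this means $m_p\mid\chi/\sqrtp\Phi_p$. Thus a monic integer lift $f_p$ of $\chi/\sqrtp\Phi_p$, of degree $r=n-\deg\sqrtp\Phi_p$, satisfies $f_p(A)e\equiv 0\pmod p$.
\item Replacing the columns $A^{r+i}e$ of $W$ by $A^if_p(A)e/p$ gives an integral matrix $W_{(p)}(G)$. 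Square-freeness of $d_n$ gives $v_p(\det W)=\nullity_pW$, so $v_p(\det W_{(p)})=\nullity_pW-\deg\sqrtp\Phi_p=0$.
\item Because $f_p$ depends only on the generalized-spectral invariants $\chi$ and $\Phi_p$, one has $Q^\top W_{(p)}(G)=W_{(p)}(H)$. Hence $\ell(Q)$ divides the last invariant factor of $W_{(p)}(G)$, which is prime to $p$.
\item Combined with $\ell$ odd (from $d_n\equiv 2\pmod 4$), this gives $\ell=1$.
\end{enumerate}
This argument works precisely in the totally isotropic situation that your reduction to $\mathcal N\cap\mathcal N^{\perp}=0$ excludes.
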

	Note that, by definition, $\deg \sfp \Phi_p(G;x)\le \deg \sqrtp \Phi_p(G;x)$. We shall show that 	$\deg\sqrtp \Phi_p(G;x)$ is always upper bounded by $\nullity_p W$ (see Corollary \ref{sqrtmid}). It follows that
	\begin{equation*}
		\deg\sfp \Phi_p(G;x)\le \deg \sqrtp \Phi_p(G;x)\le \nullity_p W.
	\end{equation*}
	This means that Eq.~\eqref{keyequ1} implies Eq.~\eqref{keyequ2}; but the converse is not true in general as shown by later examples. Thus, Theorem \ref{main} is indeed an improvement of Theorem \ref{main0}.
	
	A crucial step for the proof of Theorem \ref{main} is the following factorization of the polynomial $\Phi_p(G;x)$:
	\begin{equation}\label{fac}
		\Phi_p(G;x)=\chi(A|_{\N(W^\top)};x)\chi(A|_{\rad \N(W^\top)};x),
	\end{equation} 
	where $\rad \N(W^\top)\subset \F_p^n$ is the radical of the left null space $\N(W^\top)$, in symbols, 
	\begin{equation*}
\rad \N(W^\top)=\N(W^\top)\cap \left(\N(W^\top)\right)^\perp.
	\end{equation*}
It turns out that
	the polynomial
	\begin{equation}\label{mp}
	\frac{\chi(G;x)}{\sqrtp \Phi_p(G;x)}
	\end{equation}
	is an annihilating polynomial of $A$ with respect to the all-ones vector $e$. Theorem \ref{main} follows from this newly established fact combined with the original argument in \cite{wang2023EJC}.
	
	We remark that all our arguments hold for any prime $p$, including $p=2$. Notably, for the case $p=2$, we shall show that the current arguments agree with the method of Qiu et al.~\cite{qiu2023DM}. Explicitly, we shall show that the polynomial defined in Eq.~\eqref{mp} for the case $p=2$ is exactly the annihilating polynomial of $A$ given in \cite[Lemma 2.10]{qiu2023DM}.
	
	The remainder of this paper is organized as follows. In Section 2, we establish the 
	fundamental factorization of the polynomial $\Phi_p(G;x)$, which serves as the 
	algebraic basis for our study. Section 3 is devoted to the complete proof of 
	Theorem \ref{main}. In Section 4, we demonstrate that our new polynomial framework 
	is consistent with existing methods for the case $p=2$. Finally, in Section 5, 
	we apply Theorem \ref{main} to the study of the constructions of DGS-graphs via rooted product graphs. We introduce 
	a broad family of graphs, denoted by $\mathcal{G}$, and prove that the rooted 
	product operation preserves the DGS-property under certain conditions. This 
	significantly extends the construction methods recently reported in \cite{wang2026DAM}, 
	providing a more flexible framework for generating DGS graphs using rooted product operations.

	\section{A factorization of $\Phi_p(G;x)$}
	The main aim of this section is to establish Eq.~\eqref{fac}.	Throughout this section, we fix a prime $p$, and all matrices and polynomials are evaluated over the finite field $\F_p$. Let $G$ be an $n$-vertex graph with adjacency matrix $A$. Let $R(G;x)$, or simply $R(x)$, be the formal power series  in $x^{-1}$ defined by:
    \begin{equation*}
	R(x) = e^\top (xI - A)^{-1} e
	= \sum_{j=0}^\infty \frac{e^\top A^j e}{x^{j+1}}
	\in \F_p[[x^{-1}]].
    \end{equation*}
			
	\begin{lemma}[Matrix Determinant Lemma {\cite[Sec.~18.1]{harville2008}}] \label{mdl}
		Let $M$ be an  $n \times n$ matrix, and $u, v$ be $n$-dimensional column vectors. Then
		\begin{equation*}
			\det(M + u v^\top) = \det(M)  + v^\top (\adj M) u.
		\end{equation*}
	\end{lemma}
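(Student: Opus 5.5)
The statement just before the break is Lemma \ref{mdl}, the Matrix Determinant Lemma: $\det(M+uv^\top)=\det M+v^\top(\adj M)u$. I will prove it as a polynomial identity in the entries of $M$, $u$, $v$. This makes it valid over any commutative ring, in particular over $\F_p$ and over $\F_p[x]$, which is where the paper will apply it (with $M=xI-A$ and $u=v=e$, relating $\chi(A+J;x)$ to $\chi(A;x)$).

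\emph{Step 1 (invertible case).} Assume first that $M$ is invertible over a field. Factor
\[
M+uv^\top=M\,(I+M^{-1}uv^\top).
\]
For any vectors $a,b$ one has $\det(I+ab^\top)=1+b^\top a$. This follows from the block identity
\[
\begin{pmatrix} I & 0\\ b^\top & 1\end{pmatrix}
\begin{pmatrix} I+ab^\top & a\\ 0 & 1\end{pmatrix}
\begin{pmatrix} I & 0\\ -b^\top & 1\end{pmatrix}
=\begin{pmatrix} I & a\\ 0 & 1+b^\top a\end{pmatrix}.
\]
Taking determinants on both sides gives the claim. Applying it with $a=M^{-1}u$ and $b=v$ yields
\[
\det(M+uv^\top)=\det M\,\bigl(1+v^\top M^{-1}u\bigr)=\det M+v^\top(\det M\cdot M^{-1})u .
\]
Since $\det M\cdot M^{-1}=\adj M$, this is exactly the asserted formula.

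\emph{Step 2 (general case).} Both sides of the identity are polynomials with integer coefficients in the $n^2+2n$ indeterminate entries of $M$, $u$, $v$. Over $\mathbb{Q}$, adjoin these indeterminates. The generic matrix is invertible over the rational function field, since its determinant is a nonzero polynomial. Step 1 therefore gives equality of the two polynomials in $\mathbb{Z}[\text{entries}]$. Specializing the indeterminates to an arbitrary commutative ring, such as $\F_p$ or $\F_p[x]$, preserves the equality.

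An alternative route avoids the generic argument. Multilinearity of the determinant in the columns gives directly
\[
\det(M+uv^\top)=\det M+\sum_j v_j\det\bigl(M \text{ with column } j \text{ replaced by } u\bigr),
\]
because every term that uses two or more columns of $uv^\top$ contains two proportional columns and vanishes. By Cramer's rule, $\sum_j v_j\det(M_j(u))=v^\top(\adj M)u$.

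I do not expect a real obstacle. The only point needing care is that the paper works over $\F_p$, including $p=2$, and over polynomial rings, where $M$ need not be invertible. Step 2, or the multilinearity argument, covers that case.
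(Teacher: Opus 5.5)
The paper gives no proof of this lemma. It quotes it from Harville, whose setting is real matrices, and then uses it without comment over $\F_p[x]$.

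Your proof is correct and self-contained:
\begin{itemize}
\item The block identity does give $\det(I+ab^\top)=1+b^\top a$.
\item The invertible case then follows from $\adj M=\det M\cdot M^{-1}$.
\item The extension via the generic matrix over $\mathbb{Q}(\text{entries})$, followed by specialization, is sound. Determinant and adjugate are universal integer polynomials in the entries, so the identity transfers to any commutative ring.
\end{itemize}

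Your multilinearity route is the more economical of the two, because it works directly over any commutative ring. The terms using two or more columns of $uv^\top$ vanish because the determinant is alternating in the strong sense: two equal columns give $0$, which also holds in characteristic $2$. The identity $\det M_j(u)=((\adj M)u)_j$, which you attribute to Cramer's rule, is just the Laplace expansion of $M_j(u)$ along column $j$.

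What your argument adds over the bare citation is exactly the validity of the formula over $\F_p[x]$ for every prime $p$, including $p=2$. That is how it is used in Lemmas~\ref{pG} and~\ref{de}. For those particular applications even less is needed. There $M=xI\mp A$ has monic, hence nonzero, determinant, so $M$ is invertible over the field $\F_p(x)$. Your Step~1 applied over $\F_p(x)$ therefore already gives the required identity in $\F_p[x]$.
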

	\begin{definition}[\cite{wang2023EJC}]\label{pm}\normalfont
		The \emph{$p$-main polynomial} of a graph $G$, denoted by $m_p(G;x)$ or $m_p(x)$, is  the monic polynomial $f\in \mathbb{F}_p[x]$ of
		smallest degree such that $f(A)e = 0$.
	\end{definition}
	\begin{lemma}\label{rsq}
	$R(x)$ can be written uniquely as a reduced fraction: 
	\begin{equation*}
		R(x)=\frac{s(x)}{q(x)},
		\end{equation*}
	where $q(x)\in \F_p[x]$ is monic and  $\gcd(s(x),q(x))=1$.	Moreover, $q(x)$ divides $m_p(x)$,  the $p$-main polynomial of $G$.
	\end{lemma}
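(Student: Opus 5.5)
We will show that $R(x)$ is a rational function of the form $g(x)/m_p(x)$ with $g\in\F_p[x]$. Then the reduced form is obtained by cancelling common factors, and its denominator automatically divides $m_p(x)$.

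\textbf{Step 1: the numerator.} Write $m_p(x)=\sum_{k=0}^{d} c_k x^k$ with $c_d=1$. Define
\begin{equation*}
g(x)=e^\top\frac{m_p(x)I-m_p(A)}{xI-A}\,e=\sum_{k=1}^{d}c_k\sum_{i=0}^{k-1}x^{k-1-i}\,e^\top A^{i}e .
\end{equation*}
Here $(x^kI-A^k)/(xI-A)$ means the polynomial matrix $\sum_{i=0}^{k-1}x^{k-1-i}A^i$, so $g(x)\in\F_p[x]$.

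\textbf{Step 2: the identity in $\F_p[[x^{-1}]]$.} Since $xI-A$ is invertible over $\F_p[[x^{-1}]]$, we have
\begin{equation*}
m_p(x)(xI-A)^{-1}=(xI-A)^{-1}m_p(A)+\frac{m_p(x)I-m_p(A)}{xI-A}.
\end{equation*}
Sandwich this between $e^\top$ and $e$. The term $e^\top (xI-A)^{-1}m_p(A)e$ vanishes because $m_p(A)e=0$ by Definition~\ref{pm}. Note that $(xI-A)^{-1}$ commutes with $m_p(A)$, so the order of factors causes no problem. This gives $m_p(x)R(x)=g(x)$. One can also check this directly by comparing coefficients of $x^{-j}$: the negative-power coefficients of $m_p(x)R(x)$ are $e^\top A^{j}m_p(A)e=0$.

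\textbf{Step 3: reduction and uniqueness.} Let $h=\gcd(g,m_p)$, and set $s=g/h$ and $q=m_p/h$, normalized so that $q$ is monic. Then $R=s/q$ with $\gcd(s,q)=1$, and $q\mid m_p$.

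For uniqueness, suppose $s/q=s'/q'$ with both fractions reduced and $q,q'$ monic. Then $sq'=s'q$ holds in $\F_p[[x^{-1}]]$, which contains $\F_p[x]$. Hence it holds in $\F_p[x]$. Coprimality gives $q\mid q'$ and $q'\mid q$, and monicity then gives $q=q'$ and $s=s'$.

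The reduced fraction here is the standard one over $\F_p(x)$, viewed inside the field of Laurent series in $x^{-1}$; this embedding is injective. The case $R=0$ (possible over $\F_p$, e.g.\ $s=0$) is handled by the convention $q=1$, which trivially divides $m_p$.

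\textbf{Main obstacle.} The only delicate point is justifying the polynomial-division identity in Step 2 inside $\F_p[[x^{-1}]]$, and this is routine.
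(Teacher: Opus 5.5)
Your proof is correct and is essentially the paper's argument: show that $m_p(x)R(x)$ is a polynomial because $m_p(A)e=0$, then cancel the $\gcd$ to obtain a reduced fraction whose denominator divides $m_p(x)$. Your resolvent identity packages the paper's coefficient check (the coefficient of $x^{-(k+1)}$ in $f(x)R(x)$ is $e^\top A^k f(A)e$) into an explicit numerator $g(x)$. You also spell out uniqueness and the case $R=0$, which the paper leaves implicit. The paper's coefficient-wise version also yields the converse ($f(x)R(x)$ is a polynomial only if $e^\top A^k f(A)e=0$ for all $k\ge 0$), which yours does not. The paper uses that converse right after this lemma to characterize $q(x)$ as the minimal-degree monic polynomial with this property.
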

\begin{proof}
		Let $f(x) = \sum_{i=0}^d c_i x^i$. Multiplying the formal power series $R(x)$ by $f(x)$ yields:
	\begin{equation*}
		f(x) R(x) = \left( \sum_{i=0}^d c_i x^i \right) \left( \sum_{j=0}^\infty \frac{e^\top A^j e}{x^{j+1}} \right) = \sum_{i=0}^d \sum_{j=0}^\infty c_i (e^\top A^j e) x^{i-j-1}.
	\end{equation*}
	 Thus, for $k\ge 0$, the coefficient of $x^{-(k+1)}$ in this product is exactly
	\begin{equation*}
		\sum_{i=0}^d c_i (e^\top A^{k+i} e), \text{~or equivalently,~}    e^\top A^{k}f(A) e.
	\end{equation*}
	Therefore, $f(x)R(x)$ contains no negative powers of $x$, that is, $f(x)R(x)$ is a polynomial, if and only if  
	\begin{equation}\label{efe}
		e^\top A^{k}f(A) e=0  \text{~for all~} k\ge 0, \text{~over~} \F_p.
	\end{equation}
	As $m_p(A)e=0$, we see that Eq.~\eqref{efe} holds when $f(x)=m_p(x)$.  This gives $R(x)=\frac{\tilde{s}(x)}{m_p(x)}$ for some $\tilde{s}(x)\in \F_p[x]$.
	Let
	\begin{equation*}
	s(x)=\frac{\tilde{s}(x)}{\gcd(\tilde{s}(x),m_p(x))}\text{~and~}
	q(x)=\frac{m_p(x)}{\gcd(\tilde{s}(x),m_p(x))}.
	\end{equation*} Then we have the desired form $R(x)=\frac{s(x)}{q(x)}$. This completes the proof of Lemma \ref{rsq}.
\end{proof}
In the following,  the symbol $q(x)$ always refers to the denominator polynomial of $R(x)$  as claimed in Lemma \ref{rsq}. From the proof, the polynomial $q(x)$ can be equivalently defined to be the monic polynomial $f(x)\in \F_p[x]$ of the smallest degree such that Eq.~\eqref{efe} holds.
	\begin{lemma} \label{pG}
	 We have $\Phi_p(G; x)=\frac{\chi(A;x)}{q(x)}$.
	\end{lemma}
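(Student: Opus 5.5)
Since $J=ee^\top$, I would apply the Matrix Determinant Lemma (Lemma \ref{mdl}) with $M=xI-A$, $u=-e$, $v=e$. This gives
\begin{equation*}
\chi(A+J;x)=\det(xI-A-ee^\top)=\chi(A;x)-e^\top\adj(xI-A)e .
\end{equation*}
Working in $\F_p((x^{-1}))$, where $xI-A$ is invertible, we have $\adj(xI-A)=\chi(A;x)(xI-A)^{-1}$. Hence
\begin{equation*}
\chi(A+J;x)=\chi(A;x)\bigl(1-R(x)\bigr)=\chi(A;x)\,\frac{q(x)-s(x)}{q(x)},
\end{equation*}
using the reduced form $R=s/q$ from Lemma \ref{rsq}. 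Both sides are polynomials, so this identity holds in $\F_p[x]$.

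Next I check that $q(x)\mid\chi(A;x)$. Lemma \ref{rsq} gives $q\mid m_p$. Since $\chi(A;A)=0$ by Cayley--Hamilton, we have $\chi(A;A)e=0$, so $m_p\mid\chi(A;x)$. Therefore $q\mid \chi(A;x)$. Set $h(x)=\chi(A;x)/q(x)\in\F_p[x]$. Then
\begin{equation*}
\chi(A;x)=h\,q,\qquad \chi(A+J;x)=h\,(q-s).
\end{equation*}

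It follows that $\gcd(\chi(A;x),\chi(A+J;x))=h\cdot\gcd(q,q-s)$, up to normalization. Because $\gcd(q,q-s)=\gcd(q,s)=1$, the gcd is $h$. The polynomial $h$ is monic, since $\chi(A;x)$ and $q$ are both monic. Thus $\Phi_p(G;x)=\chi(A;x)/q(x)$.

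The only delicate point is justifying the determinant-lemma computation over $\F_p$ with the formal series $R(x)$. This amounts to the identity $e^\top\adj(xI-A)e=\chi(A;x)R(x)$. I would verify it in $\F_p((x^{-1}))$, where $(xI-A)^{-1}=\sum_j A^jx^{-j-1}$ converges formally. Alternatively, one can note that it is a polynomial identity valid over $\mathbb{Z}$ and then reduce modulo $p$.
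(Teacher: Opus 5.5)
Your proof is correct and follows essentially the paper's route: the Matrix Determinant Lemma gives $\chi(A+J;x)=\chi(A;x)-e^\top\adj(xI-A)e$, and combining $e^\top\adj(xI-A)e=\chi(A;x)R(x)$ with the reduced form $R=s/q$ pins down the gcd as $\chi(A;x)/q(x)$. The only difference is bookkeeping. You check $q\mid\chi(A;x)$ explicitly via $q\mid m_p\mid\chi(A;x)$ and then compute $\gcd(hq,h(q-s))=h$. The paper instead reads off $\gcd(e^\top\adj(xI-A)e,\chi(A;x))=\chi(A;x)/q(x)$ directly from uniqueness of the reduced fraction, and then uses $\gcd(\chi,\chi-N)=\gcd(\chi,N)$ with $N=e^\top\adj(xI-A)e$.
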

	\begin{proof}
		As $(xI-A)^{-1}=\frac{\adj(xI - A) }{\chi(A; x)}$, we have 
		\begin{equation*}
			R(x)=e^\top (xI-A)^{-1}e=\frac{e^\top\adj(xI-A)e}{\chi(A;x)}.
		\end{equation*}
		By Lemma \ref{rsq}, we have $R(x)=\frac{s(x)}{q(x)}$ in reduced fraction. It follows that 
		\begin{equation}\label{g1}
			\gcd(e^\top\adj(xI-A)e,\chi(A;x))=\frac{\chi(A;x)}{q(x)}.
		\end{equation}
		On the other hand, by Lemma \ref{mdl}, we obtain
		\begin{equation*} 
			\chi(A+J; x) = \det(xI - A - ee^\top) = \chi(A; x) - e^\top \adj(xI - A) e.
		\end{equation*}
Consequently,
	\begin{equation}\label{g2}
		\Phi_p(G;x)=\gcd(\chi(A;x),\chi(A+J;x))=\gcd(e^\top\adj(xI-A)e,\chi(A;x)).
	\end{equation}
	Combining Eqs.~\eqref{g1} and \eqref{g2}, Lemma \ref{pG} follows.
	\end{proof}
	Recall that for a subspace $V$ of $\F_p^n$ (equipped with standard inner product $\langle u,v\rangle=u^\top v$), the \emph{orthogonal complement} $V^\perp$ is the subspace consisting of all vectors in $\F_p^n$ that are orthogonal to $V$, i.e., $V^\perp=\{u\in \F_p^n\colon\,u^\top v=0\text{~for all~} v\in V\}$. The \emph{radical} \cite[p.~266]{roman} of $V$, denoted by $\rad(V)$, is the intersection of $V$ and $V^\perp$. Note that $(V^\perp)^\perp=V$ and hence $\rad(V)=\rad(V^\perp)$.
	\begin{lemma}[\cite{wang2023EJC}]\label{vvp}
	Let $V\subset \F_p^n$ be an $A$-invariant subspace. Then $V^\perp$ is also $A$-invariant and  $\chi(A;x)=\chi(A|_V;x)\chi(A|_{V^\perp};x)$.
	\end{lemma}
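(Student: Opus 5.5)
The plan is to prove the two claims separately: first the $A$-invariance of $V^\perp$, then the factorization. The only tools needed are the symmetry $A^\top=A$ and the nondegeneracy of the standard inner product on $\F_p^n$. The point to watch is that over $\F_p$ we may have $V\cap V^\perp\neq 0$, so $\F_p^n=V\oplus V^\perp$ can fail. The factorization therefore cannot come from a direct-sum decomposition; it will come from a quotient/duality argument.

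\emph{Invariance.} Let $u\in V^\perp$ and $v\in V$. Since $V$ is $A$-invariant, $Av\in V$, and so
\begin{equation*}
(Au)^\top v=u^\top A^\top v=u^\top (Av)=0.
\end{equation*}
Hence $Au\in V^\perp$.

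\emph{Factorization.} Put $U=V^\perp$, which is $A$-invariant by the first step.
\begin{itemize}
\item Since $U$ is invariant, $A$ induces a map $\bar A$ on the quotient $\F_p^n/U$. Choosing a basis of $\F_p^n$ that extends a basis of $U$ puts $A$ in block upper-triangular form, which gives
\begin{equation*}
\chi(A;x)=\chi(A|_{U};x)\,\chi(\bar A;x).
\end{equation*}
\item It then remains to show $\chi(\bar A;x)=\chi(A|_V;x)$. Consider the bilinear pairing $\F_p^n/U\times V\to\F_p$ given by $(u+U,v)\mapsto u^\top v$. It is well defined because $U=V^\perp$. Its left kernel is trivial by definition of $V^\perp$. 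Its right kernel is $V\cap(\F_p^n)^\perp=0$, since the standard form on $\F_p^n$ is nondegenerate. Also $\dim \F_p^n/U=n-\dim V^\perp=\dim V$. So the pairing is perfect, and it identifies $\F_p^n/U$ with the dual space $V^*$.
\item Under this identification, symmetry of $A$ gives $\langle \bar A(u+U),v\rangle=u^\top A v=\langle u+U, A|_V v\rangle$. Thus $\bar A$ is the transpose (adjoint) of $A|_V$. Concretely: take a basis of $V$ and the dual basis of $\F_p^n/U$. The matrix of $\bar A$ is then the transpose of the matrix of $A|_V$.
\item A matrix and its transpose have the same characteristic polynomial. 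Hence $\chi(\bar A;x)=\chi(A|_V;x)$, and the factorization follows.
\end{itemize}

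The main obstacle is the degenerate case $V\cap V^\perp\neq0$, where the naive direct-sum argument breaks down. The duality step above is designed to avoid it: it uses nondegeneracy only of the form on the whole space $\F_p^n$, never its restriction to $V$. The dimension count $\dim V^\perp=n-\dim V$ holds over any field and also gives $(V^\perp)^\perp=V$, which the paper already notes.
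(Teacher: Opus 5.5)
The paper gives no proof of this lemma. It is quoted from \cite{wang2023EJC} and used as a black box, so there is no in-paper argument to compare yours against.

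Your proof is correct and complete. Invariance follows in one line from $A^\top=A$ and the $A$-invariance of $V$. For the factorization:
\begin{itemize}
\item The block-triangular form, relative to a basis extending one of $U=V^\perp$, gives $\chi(A;x)=\chi(A|_U;x)\chi(\bar A;x)$.
\item The pairing $(u+U,v)\mapsto u^\top v$ is well defined and has trivial kernels on both sides. This uses only the nondegeneracy of the form on all of $\F_p^n$.
\item The identity $(Au)^\top v=u^\top(Av)$ makes $\bar A$ and $A|_V$ mutually adjoint, so their matrices in dual bases are transposes of each other.
\end{itemize}
In coordinates, the same argument reads: if the columns of $P$ form a basis of $V$ and $AP=PM$, then $P^\top$ vanishes exactly on $V^\perp$ and satisfies $P^\top A=M^\top P^\top$, so it intertwines $\bar A$ with $M^\top$.

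Your route has three benefits worth noting.
\begin{itemize}
\item You rightly avoid the decomposition $\F_p^n=V\oplus V^\perp$. In this paper the degenerate case is the main case, not a corner case. Proposition \ref{bf} applies the lemma to $V=\col W$, whose radical is the cyclic space generated by $q(A)e$; this is nonzero exactly when $q(x)\neq m_p(x)$. For $G\in\mathcal{G}$ and each odd prime $p\mid\det W$, Lemma \ref{equiv3} even gives $V^\perp=\rad V\subseteq V$.
\item Your argument does not depend on the characteristic, so it covers $p=2$. The paper needs this case, since Section 4 invokes Corollary \ref{em} over $\F_2$, whereas \cite{wang2023EJC} worked only with odd $p$.
\item It shows exactly where symmetry enters. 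For an arbitrary $A$, the same duality gives $\chi(A;x)=\chi(A|_V;x)\,\chi(A^\top|_{V^\perp};x)$, and $A^\top=A$ is what turns the second factor into a restriction of $A$ itself.
\end{itemize}
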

	Now we are ready to prove Eq.~\eqref{fac}, which we restate as the following proposition for convenience. 
\begin{proposition}\label{bf}We have the factorization
		\begin{equation*}
		\Phi_p(G;x)=\chi(A|_{\N(W^\top)};x)\chi(A|_{\rad \N(W^\top)};x).
	\end{equation*} 
\end{proposition}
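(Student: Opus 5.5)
By Lemma~\ref{pG} it suffices to show that $\chi(A;x)/q(x)$ equals the right-hand side. I would split $\chi(A;x)$ along the column space of $W$ and then identify $q(x)$ as the characteristic polynomial of $A$ acting on a quotient space.

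\emph{Step 1: reduce to the column space.} Let $U=\col(W)=\operatorname{span}\{e,Ae,A^2e,\dots\}\subset\F_p^n$ be the Krylov (cyclic) subspace generated by $e$. It is $A$-invariant, and $U^\perp=\N(W^\top)$. Lemma~\ref{vvp} gives
\begin{equation*}
\chi(A;x)=\chi(A|_U;x)\,\chi(A|_{\N(W^\top)};x).
\end{equation*}
It therefore remains to show
\begin{equation*}
\chi(A|_U;x)=q(x)\,\chi(A|_{\rad\N(W^\top)};x).
\end{equation*}
Since $\rad\N(W^\top)=\rad(U^\perp)=\rad(U)=U\cap U^\perp$, this is a statement about $U$ and the radical $R_0:=U\cap U^\perp$.

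\emph{Step 2: pass to the quotient.} $R_0$ is $A$-invariant, because $U$ and $U^\perp$ both are by Lemma~\ref{vvp}. So $A$ induces a linear map $\bar A$ on $\bar U:=U/R_0$, and block triangularity gives
\begin{equation*}
\chi(A|_U;x)=\chi(A|_{R_0};x)\,\chi(\bar A;x).
\end{equation*}
The claim reduces to $\chi(\bar A;x)=q(x)$.

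\emph{Step 3: identify $\chi(\bar A;x)$ with $q(x)$.} Since $U$ is cyclic with generator $e$, the quotient $\bar U$ is cyclic for $\bar A$ with generator $\bar e$. Hence $\chi(\bar A;x)$ equals the minimal polynomial of $\bar e$ with respect to $\bar A$. For $f\in\F_p[x]$ we have $f(\bar A)\bar e=0$ iff $f(A)e\in R_0$. Because $f(A)e$ already lies in $U$, this holds iff $f(A)e\in U^\perp$, i.e.\ iff $(A^ke)^\top f(A)e=e^\top A^kf(A)e=0$ for all $k\ge 0$; here symmetry of $A$ is used. This is exactly Eq.~\eqref{efe}. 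By the remark after Lemma~\ref{rsq}, the monic polynomial of least degree satisfying Eq.~\eqref{efe} is $q(x)$. Thus the minimal polynomial of $\bar e$ is $q(x)$, so $\chi(\bar A;x)=q(x)$.

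Combining the three steps with Lemma~\ref{pG} gives the proposition. The only delicate point is Step~3: one must note that cyclicity of $\bar U$ forces its characteristic and minimal polynomials to coincide, and that the condition $f(A)e\in U\cap U^\perp$ translates exactly into Eq.~\eqref{efe}. The remaining steps are routine applications of invariant-subspace factorization over $\F_p$, which are valid for every prime $p$, including $p=2$, since no nondegeneracy of the form is needed.
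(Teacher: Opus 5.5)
Your proof is correct. It follows the same skeleton as the paper's but takes the dual viewpoint at the key step.

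The paper treats the radical as a subspace. It proves, by both inclusions, that $\rad(\col W)$ equals the cyclic subspace generated by $q(A)e$. It then computes $\chi(A|_{\col W};x)=m_p(x)$ and $\chi(A|_{\rad(\col W)};x)=m_p(x)/q(x)$, which uses the divisibility $q\mid m_p$ from Lemma~\ref{rsq}.

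You pass instead to the quotient $\col W/\rad(\col W)$. You observe that it is cyclic with generator $\bar e$, and you read off its characteristic polynomial $q(x)$ directly as the annihilator of $\bar e$. Both arguments rest on the same equivalence: $f(A)e$ lies in the radical iff Eq.~\eqref{efe} holds iff $q\mid f$.

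Your route is slightly leaner. It never needs $m_p$, an explicit description of the radical, or the divisibility $q\mid m_p$; that divisibility drops out, since $\chi(\bar A;x)$ divides $\chi(A|_{\col W};x)=m_p(x)$. The paper's route gives extra information: the identification $\rad\N(W^\top)=q(A)\col W$, and the identity $\chi(A|_{\col W};x)=m_p(x)$, which it reuses in the proof of Corollary~\ref{em}.
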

\begin{proof}
	Let $V=\col W=\operatorname{span} \{e,Ae,A^2e,\ldots\}$ and $U=\operatorname{span}\{q(A)e,Aq(A)e,A^2q(A)e,\ldots\}$. Clearly, $U\subset V$ and both are $A$-invariant. Noting that the left null space of $W$ is exactly the orthogonal complement of the column space of $W$, i.e., $\N(W^\top)=(\col W)^\perp=V^\perp$,  we have 
	\begin{equation*}
		\rad \N(W^\top)=\rad (V^\perp)=\rad V.
	\end{equation*}
	We claim that $\rad V=U$. Let $w$ be any vector in $\rad V$. Then $w\in V$ and hence there exists a polynomial $f\in\F_p[x]$ such that $w=f(A)e$. Since $w\in V^\perp$, we see that 
		\begin{equation}\label{efe2}
		e^\top A^{k}f(A) e=	e^\top A^{k}w=0  \text{~for all~} k\ge 0, \text{~over~} \F_p.
	\end{equation}
	Recall that $q(x)$ is the monic polynomial $f(x)\in \F_p[x]$ of the smallest degree such that Eq.~\eqref{efe2} holds. We obtain that $f(x)$ must be a multiple of $q(x)$. This implies $f(A)e\in U$, i.e., $w\in U$. Thus $\rad V\subset U$ by the arbitrariness of $w$. Conversely, as $e^\top A^k q(A)e=0$ for all $k\ge 0$, we easily see that $U\subset V^\perp$. Noting $U\subset V$, we have the reversed inclusion $U\subset \rad V$ and hence the claim follows.

	Note that for a cyclic subspace $S$ generated by some vector $\alpha$, i.e., $S=\operatorname{span}\{\alpha,A\alpha,A^2\alpha,\ldots\}$, the characteristic polynomial of $A$ restricted to $S$ is the unique monic polynomial $g(x)$ of the minimum degree such that $g(A)\alpha=0$. Thus, by Definition \ref{pm}, we have $\chi(A|_V;x)=m_p(x)$.

	Moreover, by Lemma \ref{rsq}, we know that $q(x)\mid m_p(x)$. Since $U$ is a cyclic subspace generated by $q(A)e$, its characteristic polynomial is the monic polynomial $h(x)$ of the minimum degree such that $h(A)q(A)e=0$. Since $m_p(A)e=0$, we easily find that $h(x) = \frac{m_p(x)}{q(x)}$, which gives $\chi(A|_U;x)=\frac{m_p(x)}{q(x)}$. 
	
	It follows from Lemma \ref{vvp} that $\chi(A|_{V^\perp};x)=\frac{\chi(A;x)}{\chi(A|_V;x)}=\frac{\chi(A;x)}{m_p(x)}$ and hence
	\begin{align*}
	\chi(A|_{\N(W^\top)};x)\chi(A|_{\rad \N(W^\top)};x)&=	\chi(A|_{V^\perp};x)\chi(A|_{\rad (V^\perp)};x)\\
	&=\chi(A|_{V^\perp};x)\chi(A|_{U};x)\\
	&=\frac{\chi(A;x)}{m_p(x)}\times\frac{m_p(x)}{q(x)}\\
	&=\frac{\chi(A;x)}{q(x)}\\
	&=\Phi_p(G;x),
	\end{align*}
	where we use  Lemma \ref{pG} for the last equality. This completes the proof of Proposition \ref{bf}.	
\end{proof}
\begin{corollary} \label{sqrtmid}
The polynomial	 $\sqrtp \Phi_p(G; x)$ divides $\chi(A|_{\N(W^\top)}; x)$ and hence 
\begin{equation*}
	\deg \sqrtp \Phi_p(G;x)\le \nullity_p W.
\end{equation*}
\end{corollary}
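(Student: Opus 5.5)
Write $P(x)=\chi(A|_{\N(W^\top)};x)$ and $Q(x)=\chi(A|_{\rad \N(W^\top)};x)$, so that Proposition \ref{bf} reads $\Phi_p(G;x)=P(x)Q(x)$. The plan is to show that $Q$ divides $P$, and then to conclude by a multiplicity count over irreducible factors.

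\emph{Step 1: $Q\mid P$.} By Lemma \ref{vvp} applied to $V=\col W$, the subspace $\N(W^\top)=V^\perp$ is $A$-invariant. The radical $\rad\N(W^\top)$ is an intersection of $A$-invariant subspaces, so it is an $A$-invariant subspace of $\N(W^\top)$. Moreover, as shown in the proof of Proposition \ref{bf}, it equals the cyclic space $U$. For an invariant subspace $S'\subset S$, choose a basis of $S'$ and extend it to a basis of $S$. The matrix of $A|_S$ is then block upper triangular, so $\chi(A|_{S'};x)$ divides $\chi(A|_S;x)$. Hence $Q\mid P$.

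\emph{Step 2: multiplicity count.} Fix a monic irreducible $f\in\F_p[x]$. Let $a$ and $b$ be the exponents of $f$ in $P$ and $Q$ respectively. Step 1 gives $b\le a$. The exponent of $f$ in $\Phi_p$ is $e=a+b$, so
\[
\left\lceil \tfrac{e}{2}\right\rceil=\left\lceil \tfrac{a+b}{2}\right\rceil\le \left\lceil \tfrac{2a}{2}\right\rceil=a.
\]
Taking the product over all $f$ gives $\sqrtp\Phi_p(G;x)\mid P(x)$.

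\emph{Step 3: degree bound.} Finally, $\deg P=\dim\N(W^\top)$. Since $W$ is square, $\dim\N(W^\top)=n-\operatorname{rank}_pW=\nullity_p W$. This yields $\deg\sqrtp\Phi_p(G;x)\le\nullity_p W$.

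The only step that needs any care is Step 1, and specifically the fact that the radical is an $A$-invariant subspace of $\N(W^\top)$. This is immediate from its description as $U$, or directly from the invariance of both $V$ and $V^\perp$. The rest is bookkeeping.
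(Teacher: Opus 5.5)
Your proof is correct and follows essentially the same route as the paper: $\rad\N(W^\top)$ is an $A$-invariant subspace of $\N(W^\top)$, so $\chi(A|_{\rad \N(W^\top)};x)$ divides $\chi(A|_{\N(W^\top)};x)$, and then Proposition~\ref{bf} gives the multiplicity bound $\lceil e/2\rceil\le a$ (the paper phrases this step as $\Phi_p\mid g^2$). You are also slightly more careful than the paper, which uses the $A$-invariance of the radical without stating it.
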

	\begin{proof}
		Let $g(x)=\chi(A|_{\N(W^\top)};x)$ and $h(x)=\chi(A|_{\rad \N(W^\top)};x)$. Since $\rad \N(W^\top)$ is a subspace of $\N(W^\top)$, we obtain $h(x)\mid g(x)$.  This, together with Proposition \ref{bf}, implies that
		\begin{equation*}
			\Phi_p(G;x)\mid g^2(x).
			\end{equation*}
		Now, let
		\begin{equation*}
		\Phi_p(G;x) = \prod_{1\le i\le r}f_i^{e_i}
		\end{equation*}
		be the irreducible factorization of $\Phi_p$, with distinct monic irreducible polynomials $f_1,f_2,\ldots, f_r$ and positive integers $e_1,e_2,\ldots, e_r$. Then it is easy to see that each $f_i$ must be a factor of $g(x)$ with multiplicity at least $\lceil\frac{e_i}{2}\rceil$, i.e., 
		\begin{equation*}
			\prod_{1\le i\le r}f_i^{\lceil\frac{e_i}{2}\rceil}\text{~divides~} g(x).
		\end{equation*}
		This completes the proof.
	\end{proof}
	\begin{corollary}\label{em}
		The polynomial $\frac{\chi(G;x)}{\Phi_p(G;x)}$ divides $m_p(G;x)$, and $m_p(G;x)$ divides  $\frac{\chi(G;x)}{\sqrtp\Phi_p(G;x)}$.
	\end{corollary}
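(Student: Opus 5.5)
Both divisibilities will follow from Lemma~\ref{pG} together with the identities obtained in the proof of Proposition~\ref{bf}. Throughout we identify $\chi(G;x)$ with $\chi(A;x)$ over $\F_p$.

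For the first divisibility, Lemma~\ref{pG} gives $\chi(A;x)/\Phi_p(G;x)=q(x)$. Lemma~\ref{rsq} asserts $q(x)\mid m_p(x)$. So $\frac{\chi(G;x)}{\Phi_p(G;x)}$ divides $m_p(G;x)$ immediately.

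For the second divisibility, write $V=\col W$ as before, so $\N(W^\top)=V^\perp$. In the proof of Proposition~\ref{bf} we showed $\chi(A|_V;x)=m_p(x)$. By Lemma~\ref{vvp}, this gives $\chi(A;x)=m_p(x)\,\chi(A|_{\N(W^\top)};x)$. Corollary~\ref{sqrtmid} gives $\sqrtp\Phi_p(G;x)\mid \chi(A|_{\N(W^\top)};x)$. Multiplying this divisibility by $m_p(x)$ yields
\begin{equation*}
m_p(x)\,\sqrtp\Phi_p(G;x)\ \Big|\ m_p(x)\,\chi(A|_{\N(W^\top)};x)=\chi(A;x).
\end{equation*}
Hence $\sqrtp\Phi_p(G;x)$ divides $\chi(A;x)$, the quotient $\chi(A;x)/\sqrtp\Phi_p(G;x)$ is a polynomial, and $m_p(x)$ divides it.

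I do not expect a real obstacle. The only care needed is to note that the quotient in the second statement is a genuine polynomial, which the displayed divisibility establishes. Also, both Lemma~\ref{vvp} and Corollary~\ref{sqrtmid} are valid for every prime $p$, including $p=2$.
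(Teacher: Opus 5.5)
Your proof is correct and follows the paper's argument essentially verbatim: the first divisibility comes from Lemma~\ref{pG} and Lemma~\ref{rsq}, and the second from the identity $\chi(A;x)=m_p(x)\,\chi(A|_{\N(W^\top)};x)$ in the proof of Proposition~\ref{bf} combined with Corollary~\ref{sqrtmid}. You also explicitly check that $\sqrtp\Phi_p(G;x)$ divides $\chi(A;x)$, so the quotient is a genuine polynomial; the paper leaves this small detail implicit.
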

	\begin{proof}
		By Lemma \ref{pG}, we have $\frac{\chi(G;x)}{\Phi_p(G;x)}=q(x)$. By Lemma \ref{rsq}, we obtain  $q(x)\mid m_p(G;x)$ and hence the first assertion of Corollary \ref{em} follows. Moreover, from the proof of Proposition \ref{bf}, we know that 
	\begin{equation*}
		m_p(G;x)=\frac{\chi(A;x)}{\chi(A|_{(\col W)^\perp};x)}=\frac{\chi(A;x)}{\chi(A|_{\N(W^\top)};x)}.
		\end{equation*}
		Thus, the second assertion clearly follows from  Corollary \ref{sqrtmid}.
	\end{proof}
	As a byproduct, we get the following interesting inequality on nullity of walk matrices for generalized cospectral graphs.
	\begin{corollary}\label{ghe}
		If $G$ and $H$ are generalized cospectral, then for any prime $p$, 
		\begin{equation*}
			\frac{1}{2}\nullity_p(W(G))\le \nullity_p(W(H))\le 2	\nullity_p(W(G)).
		\end{equation*} 
	\end{corollary}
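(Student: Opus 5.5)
Since $G$ and $H$ are generalized cospectral, the Remark after Definition \ref{Phip} gives $\Phi_p(G;x)=\Phi_p(H;x)$ over $\F_p$ for every prime $p$. The plan is to bound $\nullity_p W$ for each graph above and below by $\deg\Phi_p$, which is then a common quantity for $G$ and $H$.

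For the bounds, write $\N_G=\N(W(G)^\top)$. Since $W(G)$ is square of order $n$, we have $\dim \N_G=\nullity_p W(G)$. By Proposition \ref{bf},
\begin{equation*}
\deg\Phi_p(G;x)=\dim\N_G+\dim\rad\N_G .
\end{equation*}
The radical $\rad \N_G$ is a subspace of $\N_G$, so $0\le \dim \rad\N_G\le \dim\N_G$. This gives
\begin{equation*}
\nullity_p W(G)\le \deg \Phi_p(G;x)\le 2\,\nullity_p W(G).
\end{equation*}
Equivalently, the upper bound is $\deg\Phi_p\le 2\deg\sqrtp\Phi_p\le 2\,\nullity_p W$ by Corollary \ref{sqrtmid}. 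The same inequalities hold with $H$ in place of $G$.

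Chaining the bounds through the common degree gives
\begin{equation*}
\nullity_p W(H)\le \deg\Phi_p(H;x)=\deg\Phi_p(G;x)\le 2\,\nullity_p W(G),
\end{equation*}
which is the right-hand inequality. Exchanging the roles of $G$ and $H$ gives $\nullity_p W(G)\le 2\,\nullity_p W(H)$, which is the left-hand inequality.

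No step here is a real obstacle. The only point to check is that $\deg\chi(A|_S;x)=\dim S$ for an $A$-invariant subspace $S$, and this is immediate. The substantive work was already done in Proposition \ref{bf}.
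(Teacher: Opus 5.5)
Your proof is correct and follows essentially the paper's route: both establish $\frac{1}{2}\deg\Phi_p(G;x)\le \nullity_p W(G)\le \deg\Phi_p(G;x)$ for each graph and then use the invariance of $\Phi_p$ under generalized cospectrality. The only difference is that you read these bounds directly off the dimension count $\deg\Phi_p=\dim\N(W^\top)+\dim\rad\N(W^\top)$ from Proposition~\ref{bf} (the same count the paper later uses in Lemma~\ref{equiv3}), whereas the paper passes through Corollary~\ref{em} and $\deg m_p(G;x)=n-\nullity_p W(G)$.
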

	\begin{proof}
	Using Corollary \ref{em}, we find that 
	\begin{equation}\label{eqs}
	\deg \frac{\chi(G;x)}{\Phi_p(G;x)}\le \deg m_p(G;x)\le 	\deg \frac{\chi(G;x)}{\sqrtp\Phi_p(G;x)}.
	\end{equation}	As $\deg m_p(G)=n-\nullity_p(W(G))$, Eq. \eqref{eqs} can be restated as
	\begin{equation*}
\deg \sqrtp\Phi_p(G;x)\le \nullity_p(W(G))\le 	\deg \Phi_p(G;x).
	\end{equation*}
By the definition of the square root of a polynomial, we easily see that \begin{equation*}
	\deg \sqrtp\Phi_p(G;x)\ge \frac{1}{2}\deg \Phi_p(G;x),
\end{equation*}
and consequently, 
	\begin{equation}\label{ieqG}
\frac{1}{2}	\deg\Phi_p(G;x)\le \nullity_p(W(G))\le 	\deg \Phi_p(G;x).
\end{equation}

	Noting that $\Phi_p(G;x)$ is invariant under generalized cospectrality, we have $\Phi_p(G;x)=\Phi_p(H;x)$. Thus, we also have
		\begin{equation}\label{ieqH}
	\frac{1}{2}	\deg\Phi_p(G;x)\le \nullity_p(W(H))\le 	\deg \Phi_p(G;x).
	\end{equation}
	Combining Eqs.\eqref{ieqG} and \eqref{ieqH}, Corollary \ref{ghe} follows.	
	\end{proof}
	\section{Completing the proof of Theorem \ref{main}}
	We shall prove a slightly stronger result, from which Theorem \ref{main} is a direct consequence. The main tool is  Corollary \ref{em} together with some  skills developed in \cite{qiu2023DM} and \cite{wang2023EJC}. We begin with some standard preliminaries and notions.
	
	A graph is \emph{controllable} \cite{godsil2012AC} if $W(G)$ is non-singular. An orthogonal matrix $Q$ is called \emph{regular} if $Qe=e$.  Clearly, for two graphs $G$ and $H$, if there exists a regular orthogonal matrix $Q$ such that $Q^\top A(G) Q=A(H)$, then they are generalized cospectral. It turns out that the converse is also true, which was first proved by Johnson and Newman \cite{johnson1980JCTB} and  later developed systematically in the generalized characterization of controllable graphs since the work of Wang and Xu \cite{wang2006EJC}.
	\begin{lemma}[\cite{johnson1980JCTB,wang2006EJC}]\label{mc}
		Let $G$ be a controllable graph  and $H$ be a graph generalized cospectral with $G$. Then there exists a unique	regular rational orthogonal matrix $Q$ such that $Q^\top A(G)Q = A(H)$. Moreover, this unique $Q$ satisfies
		$Q^\top W(G) = W(H)$ and  is therefore rational.	
	\end{lemma}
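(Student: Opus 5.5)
The plan is to follow the classical route of Johnson--Newman and Wang--Xu, which yields the regular orthogonal matrix $Q$ from the walk matrices. Since $H$ is generalized cospectral with $G$, they share the same characteristic polynomial. It is standard that $\chi(\overline{G};x)$ determines $\chi(A+J;x)$, via $A(\overline G)=J-I-A$ and the same matrix determinant lemma used in Lemma \ref{pG}. Expanding $e^\top \adj(xI-A)e$ through Lemma \ref{mdl}, the generating function $R(x)=e^\top(xI-A)^{-1}e$ is determined by $\chi(A;x)$ and $\chi(A+J;x)$. Hence $e^\top A^k e=e^\top A(H)^k e$ for all $k\ge 0$, so $W(G)^\top W(G)=W(H)^\top W(H)$. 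In particular, since $G$ is controllable, $\det W(H)^2=\det W(G)^2\ne 0$, and $H$ is controllable too.

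Next I would define $Q:=W(H)^{\top\,-1}\cdots$; more precisely, set $Q=W(G)W(H)^{-1}$, i.e.\ $Q$ is the rational matrix with $QW(H)=W(G)$. Then $Q^\top W(G)=W(H)$ follows once orthogonality is shown. Orthogonality comes from $W(G)^\top W(G)=W(H)^\top W(H)$: we get $Q^\top Q=W(H)^{-\top}W(G)^\top W(G)W(H)^{-1}=I$. Regularity follows because the first columns of $W(G)$ and $W(H)$ are both $e$, so $Qe=e$ by comparing first columns of $QW(H)=W(G)$.

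For the conjugation relation, the key identity is $A(G)W(G)=W(G)C$ and $A(H)W(H)=W(H)C'$, where $C$ and $C'$ are companion-type matrices determined by the characteristic polynomial. By Cayley--Hamilton, the last column of $AW$ is $A^ne$, expressed through $\chi$. Since $\chi(G)=\chi(H)$, we have $C=C'$. Then
\[
QA(H)Q^{-1}=W(G)W(H)^{-1}A(H)W(H)W(G)^{-1}=W(G)CW(G)^{-1}=A(G),
\]
so $Q^\top A(G)Q=A(H)$.

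For uniqueness, suppose $Q_1$ is any regular orthogonal matrix with $Q_1^\top A(G)Q_1=A(H)$. Then $Q_1^\top A(G)^k e=A(H)^kQ_1^\top e=A(H)^k e$, using $Q_1^\top e=e$ (from $Q_1e=e$ and orthogonality). Hence $Q_1^\top W(G)=W(H)$, and invertibility of $W(G)$ forces $Q_1^\top=W(H)W(G)^{-1}$. Rationality then follows from this formula. The main (mild) obstacle is the first step: deducing equality of all $e^\top A^k e$ from generalized cospectrality. This needs care in handling $\chi(A+J)$ versus $\chi(\overline{G})$, and the argument is done over $\mathbb{Q}$ rather than $\F_p$.
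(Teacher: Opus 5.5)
Your proof is correct. The paper gives no proof of this lemma and simply cites Johnson--Newman and Wang--Xu, and your argument is exactly that classical one: equality of all $e^\top A^k e$ (via $\chi(\overline{G};x)=(-1)^n\chi(A-J;-x-1)$, which together with $\chi(G;x)$ fixes $\chi(A+J;x)=2\chi(A;x)-\chi(A-J;x)$) gives $W(G)^\top W(G)=W(H)^\top W(H)$, so $Q=W(G)W(H)^{-1}$ is regular, orthogonal and rational, and the shared companion matrix gives $Q^\top A(G)Q=A(H)$. Your uniqueness step, which derives $Q_1^\top W(G)=W(H)$ from $Q_1^\top A(G)Q_1=A(H)$ and $Q_1e=e$, is the same computation the paper repeats at the start of the proof of Theorem \ref{gmain}.
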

	\begin{definition}[\cite{wang2006EJC}]
		For a rational matrix $Q$, the \emph{level} of $Q$, denoted by $\ell(Q)$, is the least positive integer $k$ such that each entry of $kQ$ is integral.
	\end{definition}
	The following estimation on the level of a matrix is a simple application of Smith normal forms.
	\begin{lemma}\label{bl}
		Let $M$ be a non-singular integer matrix and $d$ be the last invariant factor of $M$. Then $dM^{-1}$ is integral, i.e., $\ell(M^{-1})\mid d$.
	\end{lemma}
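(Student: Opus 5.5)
Use the Smith normal form of $M$: there are unimodular integer matrices $U,V$ with $UMV=D=\diag[d_1,\ldots,d_n]$, where $d_1\mid d_2\mid\cdots\mid d_n=d$ and all $d_i\neq 0$ because $M$ is non-singular. Solving for $M$ gives $M=U^{-1}DV^{-1}$, and inverting gives
\begin{equation*}
	M^{-1}=VD^{-1}U.
\end{equation*}
The whole argument reduces to showing that $dD^{-1}$ is an integer matrix, since the unimodular factors do not affect integrality.

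First, $D^{-1}=\diag[1/d_1,\ldots,1/d_n]$. Because each $d_i$ divides $d_n=d$, every entry $d/d_i$ of $dD^{-1}$ is an integer, so $dD^{-1}$ is integral. Second, $U$ and $V$ are unimodular integer matrices, so $dM^{-1}=V(dD^{-1})U$ is a product of three integer matrices and is therefore integral.

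Finally, we translate this into the statement about the level. The set of positive integers $k$ for which $kM^{-1}$ is integral consists exactly of the positive multiples of $\ell(M^{-1})$. To see this, let $k$ be such an integer and write $k=a\ell+r$ with $\ell=\ell(M^{-1})$ and $0\le r<\ell$. Then $rM^{-1}=kM^{-1}-a\ell M^{-1}$ is integral. Since $\ell$ is the least positive integer with this property, $r$ cannot be positive, so $r=0$. Applying this with $k=d$ yields $\ell(M^{-1})\mid d$.

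There is no real obstacle here; the only point requiring a moment's care is this last divisibility step, which needs the minimality of $\ell$ rather than just the inequality $\ell\le d$.
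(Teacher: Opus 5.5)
Your proof is correct and takes essentially the approach the paper indicates: the paper gives no written proof, only calling the lemma ``a simple application of Smith normal forms.'' Your argument writes out exactly that — $M^{-1}=VD^{-1}U$ with $dD^{-1}$ integral — and adds the minimality argument needed to upgrade integrality of $dM^{-1}$ to $\ell(M^{-1})\mid d$.
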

Let $\textup{RO}_n(\mathbb{Q})$ denote the group of all rational regular orthogonal matrices.	For a controllable graph $G$ of order $n$, we define
	\begin{equation*}
		\mathcal{Q}(G)=\{Q\in \textup{RO}_n(\mathbb{Q})\colon\, Q^\top A(G)Q\text{~is a $(0,1)$-matrix}\}.
		\end{equation*}
		Note that $Q^\top A(G)Q$ is symmetric and has trace zero. Thus, if $Q^\top A(G)Q$ is a $(0,1)$-matrix, it must be an adjacency matrix of some graph.
		
For an integer $m$ and a prime $p$, the \emph{$p$-adic valuation} \cite{gouvea} of $m$, denoted by $v_p(m)$, is the largest integer $k$ such that $p^k\mid m$. 
The following definition is essentially due to Qiu et al. \cite{qiu2023DM}.
\begin{definition}\label{rw}
	Let $G$ be an $n$-vertex controllable graph and $p$ be a prime factor of $\det W(G)$. The \emph{$p$-reduced walk matrix}, denoted by $W_{(p)}(G)$, or simply $W_{(p)}$, is the matrix
	\begin{equation}\label{dwp}
		W_{(p)}(G)=\left[e,Ae,\ldots,A^{r-1}e,\frac{f_p(A)e}{p},\frac{Af_p(A)e}{p},\ldots,\frac{A^{n-1-r}f_p(A)e}{p}\right],
	\end{equation}
	where $r=\deg f_p(x)$ and $f_p(x)\in \mathbb{Z}[x]$ is a monic polynomial whose reduction over $\F_p$ is  $\frac{\chi(G; x)}{\sqrtp\Phi_p(G; x)}$. For uniqueness, we usually assume each coefficient of $f_p(x)$ is in  $\{0,1,\ldots,p-1\}$.
\end{definition}
We remark that if $p\nmid \det W(G)$, then $m_p(G;x)$ coincides with the reduction of $\chi(G;x)$ modulo $p$ and it follows from Corollary  \ref{em} that $f_p(x)\equiv \chi(G;x)\pmod{p}$. Thus, in this case, we have $r=n$  and hence the matrix $W_{(p)}(G)$ given in Eq. \eqref{dwp} is exactly the ordinary walk matrix $W(G)$. On the other hand, for the case  $p\mid \det W(G)$, we have $\deg m_p(G)<n$ and hence $\Phi_p(G;x)\neq 1$ by the first assertion of Corollary \ref{em}. Consequently, $\sqrtp \Phi_p(G;x)\neq 1$ and hence $r=\deg f_p(x)<n$. Thus, for $p\mid \det W(G)$, we must have $W_{(p)}(G)\neq W(G)$. 
\begin{proposition}\label{wi}
	The matrix $W_{(p)}(G)$ is integral for each controllable graph $G$ and each prime  $p$ with $p\mid \det W(G)$.
\end{proposition}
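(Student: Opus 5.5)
The first $r$ columns $A^ie$ are clearly integral, so the whole task is to show that each vector $A^k f_p(A)e$, for $0\le k\le n-1-r$, has all its entries divisible by $p$. Equivalently, we must show that $f_p(A)e\equiv 0 \pmod p$. Indeed, if $f_p(A)e=p\,u$ with $u$ integral, then $A^kf_p(A)e/p=A^ku$ is integral for every $k\ge 0$.

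To establish $f_p(A)e\equiv 0\pmod p$, I would reduce everything modulo $p$ and work over $\F_p$. Let $\bar A$ and $\bar f_p$ denote the reductions of $A$ and $f_p$ modulo $p$. Reduction is a ring homomorphism $\mathbb{Z}[x]\to\F_p[x]$ that is compatible with evaluation at $A$ and with multiplication by $e$. Hence the reduction of the integer vector $f_p(A)e$ is $\bar f_p(\bar A)e$. By Definition \ref{rw}, $\bar f_p(x)=\frac{\chi(G;x)}{\sqrtp\Phi_p(G;x)}$, where $\chi(G;x)$ is read in $\F_p[x]$.

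The second assertion of Corollary \ref{em} says that $m_p(G;x)$ divides $\frac{\chi(G;x)}{\sqrtp\Phi_p(G;x)}=\bar f_p(x)$. Write $\bar f_p=m_p\cdot g$ with $g\in\F_p[x]$. Then
\begin{equation*}
\bar f_p(\bar A)e=g(\bar A)\,m_p(\bar A)e=0,
\end{equation*}
because $m_p(\bar A)e=0$ by Definition \ref{pm}. This gives $f_p(A)e\equiv 0\pmod p$, as required.

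Two points need a remark. First, $\frac{\chi(G;x)}{\sqrtp\Phi_p(G;x)}$ is a genuine monic polynomial, since $\sqrtp\Phi_p\mid\Phi_p\mid\chi$ in $\F_p[x]$; so the lift $f_p$ exists and is monic of degree $r$. Second, the hypothesis $p\mid\det W(G)$ guarantees $r<n$, as noted before the proposition, so the second block of columns in \eqref{dwp} is nonempty and the matrix has exactly $n$ columns. There is no real obstacle here. The substantive input, namely the divisibility $m_p\mid \chi/\sqrtp\Phi_p$, was already obtained in Corollary \ref{em} from the factorization in Proposition \ref{bf}, so the proof is a direct application of it.
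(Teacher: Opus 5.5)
Your proof is correct and follows the paper's own argument: both deduce $f_p(A)e\equiv 0\pmod p$ from the divisibility $m_p(G;x)\mid \chi(G;x)/\sqrtp\Phi_p(G;x)$ in Corollary \ref{em}. Both then conclude that every column $A^kf_p(A)e/p$ is integral.
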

\begin{proof}
	 By Corollary \ref{em}, the reduction of $f_p(x)$  modulo $p$ is a multiple of $m_p(G;x)$. Hence $f_p(A)e\equiv 0\pmod{p}$. This clearly implies that $A^i f_p(A)e\equiv 0\pmod{p}$, i.e., $\frac{A^i f_p(A)e}{p}$ is an integer vector, for any nonnegative integer $i$. Thus, the $p$-reduced matrix $W_{(p)}(G)$ is integral, completing the proof.	 
\end{proof}
\begin{theorem}\label{gmain}
	Let $G$ be an $n$-vertex controllable graph.  Then $v_p(\ell(Q))\le v_p(d_n(W_{(p)}))$ for any $Q\in \mathcal{Q}(G)$ and any prime factor $p$ of $\det W$.
\end{theorem}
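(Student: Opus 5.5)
Let $Q\in\mathcal{Q}(G)$ and let $H$ be the graph with $A(H)=Q^\top A(G)Q$. The plan is to show that $Q^\top W_{(p)}(G)$ is an integral matrix. From this we get $Q^\top = \tilde W\, W_{(p)}(G)^{-1}$ with $\tilde W$ integral. Lemma \ref{bl} gives that $d_n(W_{(p)})W_{(p)}^{-1}$ is integral, so $d_n(W_{(p)})\,Q$ is integral and $\ell(Q)\mid d_n(W_{(p)})$. This is stronger than needed, and in particular yields $v_p(\ell(Q))\le v_p(d_n(W_{(p)}))$. Note that $W_{(p)}$ is non-singular, since it is obtained from $W$ by invertible rational column operations: the columns $A^{i}f_p(A)e$ are $A^{r+i}e$ plus lower columns, followed by scaling by $1/p$. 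It is integral by Proposition \ref{wi}.

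Since integrality of $Q^\top W_{(p)}$ at primes other than $p$ may fail, I would actually aim only at the $p$-part. The goal is to show that $Q^\top W_{(p)}(G)$ has entries in $\mathbb{Z}_{(p)}$, the rationals with denominator prime to $p$. Then $Q^\top = \tilde W W_{(p)}^{-1}$ with $\tilde W$ being $p$-integral. Applying Lemma \ref{bl} (whose proof via Smith normal form works equally over $\mathbb{Z}_{(p)}$) shows that $p^{v_p(d_n(W_{(p)}))}Q$ is $p$-integral, which is exactly the claim.

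For the columns, Lemma \ref{mc} gives $Q^\top A(G)^ie=A(H)^ie$, which is integral; so the first $r$ columns are fine. For the remaining columns, $Q^\top A(G)^i f_p(A(G))e/p = A(H)^i f_p(A(H))e/p$. So it suffices to show $f_p(A(H))e\equiv 0\pmod p$, i.e.\ that the reduction of $f_p$ is a multiple of $m_p(H;x)$. Here the invariance of $\Phi_p$ is key. Since $G$ and $H$ are generalized cospectral, $\chi(H;x)=\chi(G;x)$ and $\Phi_p(H;x)=\Phi_p(G;x)$. Corollary \ref{em} applied to $H$ (a graph, not necessarily controllable, but Corollary \ref{em} needs no controllability) gives
\[
m_p(H;x)\ \Big|\ \frac{\chi(H;x)}{\sqrtp\Phi_p(H;x)}=\frac{\chi(G;x)}{\sqrtp\Phi_p(G;x)}\equiv f_p(x)\pmod p .
\]
Hence $f_p(A(H))e\equiv 0\pmod p$, and the columns $A(H)^if_p(A(H))e/p$ are integral. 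Thus $Q^\top W_{(p)}(G)$ is in fact fully integral, and the first version of the argument (with $\ell(Q)\mid d_n(W_{(p)})$) goes through directly.

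The main subtle point is precisely this step: it is where the refined polynomial $\sqrtp\Phi_p$ enters. One needs the graph-invariant bound of Corollary \ref{em} to hold uniformly for every graph generalized cospectral with $G$, and not merely a property of $G$'s own main polynomial $m_p(G;x)$. The remaining steps — non-singularity of $W_{(p)}$ and the Smith normal form bound of Lemma \ref{bl} — are routine.
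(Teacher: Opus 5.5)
Your proposal is correct and follows essentially the same route as the paper. The paper also shows $Q^\top W_{(p)}(G)=W_{(p)}(H)$ using $f_p(G;x)=f_p(H;x)$ (from the invariance of $\chi$ and $\Phi_p$), gets integrality from Proposition \ref{wi} for $H$ (which rests on the same Corollary \ref{em} argument you use), and concludes $\ell(Q)\mid d_n(W_{(p)})$ via Lemma \ref{bl}; your $\mathbb{Z}_{(p)}$ detour is unnecessary, as you noted yourself.
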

\begin{proof}
	Let $H$ be the graph with adjacency matrix $A(H)=Q^\top A(G)Q$. Then we have $$Q^\top (A(G))^k Q=(A(H))^k$$ and hence 
\begin{equation*}
Q^\top (A(G))^k e=Q^\top (A(G))^k Qe=(A(H))^k e,
\end{equation*}
 for any $k\ge 0$. Thus, $Q^\top W(G)=W(H)$. 
 
  Let $f_p(G;x),f_p(H;x)\in \mathbb{Z}[x]$ be monic polynomials with coefficients in $\{0,1,\ldots,p-1\}$ whose reductions over $\F_p$ are
 \begin{equation*}
 	\frac{\chi(G;x)}{\sqrtp\Phi_p(G;x)}\text{~and~} \frac{\chi(H;x)}{\sqrtp\Phi_p(H;x)},
 \end{equation*}
 respectively. 
 But since $G$ and $H$ are generalized cospectral, we must have $\Phi_p(G;x)=\Phi_p(H;x)$ (and $\chi(G;x)=\chi(H;x)$ of course). It follows that
 \begin{equation*}
 	f_p(G;x)\equiv f_p(H;x)\pmod{p}
 \end{equation*}
 and hence   $f_p(G;x)=f_p(H;x)$ by the assumption on the range of coefficients.
 
 Let $p$ be any prime factor of $\det W(G)$ (or equivalently $\det W(H)$ as $\det W(G)=\pm \det W(H)$). Using Proposition \ref{wi} for both $G$ and $H$, we find that both $W_{(p)}(G)$ and $W_{(p)}(H)$ are integral.  Since  $f_p(G;x)=f_p(H;x)$, it is easy to see from the equality $Q^\top W(G)=W(H)$ and Definition \ref{rw} that
 \begin{equation*}
Q^\top W_{(p)}(G)=W_{(p)}(H).
 \end{equation*}
Let $d_n$ be the last invariant factor of $W_{(p)}(G)$. Then it is easy to see from Lemma~\ref{bl} that $\ell(Q) \mid d_n$, and hence $v_p(\ell(Q)) \le v_p(d_n)$.  This completes the proof of Theorem \ref{gmain}. 
\end{proof}
Now we show that Theorem \ref{main} is an easy consequence of Theorem \ref{gmain}.

\begin{proof}[Proof of Theorem \ref{main}]
Let $H$ be any graph that is generalized cospectral with $G$. By Lemma \ref{mc}, there exists a unique $Q\in \textup{RO}_n(\mathbb{Q})$ such that $Q^\top A(G)Q=A(H)$. Let $\ell=\ell(Q)$ be the level of $Q$. Noting  that $d_n$ is even, the squarefreeness assumption clearly implies that $d_n\equiv 2\pmod{4}$.  Thus, $\ell$ is odd, according to \cite[Corollary 1]{wang2023EJC}. We further claim that $\ell=1$.

Suppose to the contrary that $\ell>1$. Since $\ell$ is odd, there exists an odd prime $p$ such that $p\mid \ell$.  As $Q^\top=W(H)(W(G))^{-1}$, we have $\ell\mid d_n$ by Lemma \ref{bl}, where $d_n$ is the last invariant factor of $W(G)$. Thus, $p\mid d_n$. Let $d_n^\prime$ be the last invariant factor of the $p$-reduced walk matrix $W_{(p)}(G)$. It follows from Theorem \ref{gmain} that $v_p(\ell)\le v_p(d_n^\prime)$. By Definition \ref{rw}, it is easy to see that
\begin{equation*}
v_p(\det W_{(p)}(G))=v_p(\det W(G))-\deg \sqrtp \Phi_p(G;x).
\end{equation*}
As $d_n$ is square-free, we know that $v_p(\det W(G))=\nullity_p W(G)$. Noting that $d_n^\prime$ is a factor of $\det W_{(p)}(G)$, we obtain \begin{align*}
v_p(d_n^\prime)&\le v_p(\det W_{(p)}(G))\\
&=v_p(\det W(G))-\deg \sqrtp \Phi_p(G;x)\\
&=\nullity_p W(G)-\deg \sqrtp \Phi_p(G;x)\\
&=0,
\end{align*} 
where we use Eq.~\eqref{keyequ2} for the last equality. Therefore $v_p(d_n^\prime)=0$ and hence $v_p(\ell)=0$, i.e., $p\nmid \ell$. This is  a contradiction and hence the claim follows.  This means that the rational regular orthogonal matrix $Q$ is indeed an integer matrix, which clearly implies that $Q$ is a permutation matrix. Thus, $H$ is isomorphic to $G$ and hence $G$ is DGS by the arbitrariness of $H$.
\end{proof}

	\section{On annihilating polynomial for $p=2$}
	In this section, we consider the case $p=2$. Following Definition \ref{rw}, let $f_2(G;x)$, or simply $f_2(x)$, be a monic polynomial whose reduction over $\F_2$ is $\frac{\chi(G;x)}{\sqrtp \Phi_2(G;x)}$. By the last assertion of Corollary \ref{em}, we know that, over $\F_2$,  the polynomial $f_2(x)$ is a multiple of $m_2(G;x)$, the 2-main polynomial of $G$. This means $f_2(x)$ is an annihilating polynomial of $A$ with respect to $e$ over $\F_2$, i.e., $f_2(A)e\equiv 0\pmod{2}$. In \cite{wang2017JCTB} (see also \cite{qiu2023DM}), Wang gave a straightforward  way to construct an annihilating polynomial over $\F_2$. The main aim of this section is to show that the two constructions  coincide, which we state as the following theorem.
	\begin{theorem}\label{samepoly}
		Let $G$ be an $n$-vertex graph whose characteristic polynomial is $\chi(G;x)=x^n+c_1x^{n-1}+\cdots+c_{n-1}x+c_n$. Then, 
		\begin{equation*}
	f_2(G;x)\equiv\begin{cases}
			x^{\frac{n}{2}}+c_2	x^{\frac{n-2}{2}}+\cdots+c_{n-2}x+c_n\pmod{2}&\text{if $n$ is even},\\
				x(x^{\frac{n-1}{2}}+c_2	x^{\frac{n-3}{2}}+\cdots+c_{n-3}x+c_{n-1})\pmod{2}&\text{if $n$ is odd.}
		\end{cases}
		\end{equation*}
	\end{theorem}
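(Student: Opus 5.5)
The plan is to compute $q(x)$ over $\F_2$ explicitly, which by Lemma \ref{pG} gives $\Phi_2(G;x)=\chi(A;x)/q(x)$, and then to show that $\chi(A;x)$ is, up to a factor $x$, a perfect square modulo $2$.

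\textbf{Step 1: $R(x)=n/x$ over $\F_2$.} Let $s_k=e^\top A^k e \bmod 2$. Over $\F_2$ a symmetric matrix $M$ with zero diagonal satisfies $v^\top M v=\sum_{i<j}2M_{ij}v_iv_j=0$ for every $v$. Applying this with $M=A$ and $v=A^je$ gives $s_{2j+1}=0$. For even exponents, $s_{2j}=(A^je)^\top(A^je)=\sum_i (A^je)_i^2=\sum_i (A^je)_i=s_j$ over $\F_2$. By induction, $s_k=0$ for all $k\ge1$, while $s_0=n \bmod 2$. Hence $R(x)=n/x$ in $\F_2[[x^{-1}]]$. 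The reduced denominator is therefore $q(x)=x$ if $n$ is odd and $q(x)=1$ if $n$ is even, i.e.\ $q(x)=x^{\varepsilon}$ with $\varepsilon=n \bmod 2$. By Lemma \ref{pG}, $\Phi_2(G;x)=\chi(G;x)/x^{\varepsilon}$.

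\textbf{Step 2: odd-indexed coefficients of $\chi(G;x)$ are even.} This follows from the Sachs coefficient theorem. We have $c_k=\sum_{S}(-1)^{\mathrm{comp}(S)}2^{\mathrm{cyc}(S)}$ over elementary subgraphs $S$ on $k$ vertices. Modulo $2$, only the cycle-free elementary subgraphs survive, and these are perfect matchings on $k$ vertices, which exist only when $k$ is even. So $c_k\equiv0 \pmod 2$ for odd $k$. (Alternatively, one can view $\det(xI-A)$ over $\F_2$ via a Pfaffian-type argument, but the Sachs formula is quickest.)

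Write $g(x)$ for the polynomial $x^{\lfloor n/2\rfloor}+c_2x^{\lfloor n/2\rfloor-1}+\cdots$ appearing inside the statement, i.e.\ the even-indexed coefficients $c_{2i}$ attached to $x^{\lfloor n/2\rfloor-i}$. Since the Frobenius map is additive over $\F_2$ and $c_{2i}^2\equiv c_{2i}$, we get $g(x)^2\equiv\sum_i c_{2i}x^{2\lfloor n/2\rfloor-2i}$. Combined with the vanishing of the odd coefficients, this gives $\chi(G;x)\equiv x^{\varepsilon}g(x)^2 \pmod 2$.

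\textbf{Step 3: conclusion.} From Steps 1 and 2, $\Phi_2(G;x)=g(x)^2$ over $\F_2$. Every irreducible factor of $g^2$ has even multiplicity, so $\sqrtp\Phi_2(G;x)=g(x)$. Therefore $f_2(G;x)\equiv \chi(G;x)/g(x)=x^{\varepsilon}g(x)$, which is exactly the two cases of the theorem.

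The only step needing real care is Step 1. The key is to notice that the quadratic-form identity over $\F_2$ collapses the whole walk-count sequence, so $q$ is as small as possible. Everything else is a routine consequence of Lemma \ref{pG} and the Sachs formula.
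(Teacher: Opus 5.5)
Your proof is correct, but it computes $\Phi_2(G;x)$ by a different route than the paper.

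The paper does not use the series $R(x)$ in this section. Instead it proves Lemma~\ref{de}, namely $\chi(A+J;x)=\chi(A;x)+\chi'(A;x)$ over $\F_2$. This comes from Jacobi's formula $\chi'(A;x)=\tr\adj(xI+A)$ together with the fact that $e^\top M e=\tr M$ over $\F_2$ for symmetric $M$. Once Sachs gives $\chi(A;x)=x^{\varepsilon}g(x)^2$ (with $\varepsilon=n\bmod 2$, in your notation), this yields $\chi(A+J;x)=g^2$ or $(1+x)g^2$ according as $n$ is even or odd. Then $\Phi_2=g^2$ follows by taking the gcd directly from Definition~\ref{Phip}.

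You instead go through Lemma~\ref{pG} and determine the denominator $q(x)$ by a parity argument on walk counts. The odd counts $e^\top A^{2j+1}e$ are even because $A$ is symmetric with zero diagonal. The even counts satisfy $e^\top A^{2j}e\equiv e^\top A^{j}e$ because $a^2=a$ in $\F_2$. So you need neither Jacobi's formula nor a gcd computation.

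What your route buys is that it stays inside the Section~2 framework and gives extra structure. By the proof of Proposition~\ref{bf}, $q(x)=x^{\varepsilon}$ means that over $\F_2$ the radical of $\N(W^\top)$ is:
\begin{itemize}
\item all of $\col W$ when $n$ is even, i.e.\ $W^\top W\equiv 0\pmod 2$;
\item $\operatorname{span}\{Ae,A^2e,\ldots\}$ when $n$ is odd.
\end{itemize}
What the paper's route buys is an explicit formula for $\chi(A+J;x)$ modulo $2$, valid for every graph, which is of independent interest.

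The two arguments are consistent. Your Step~1 amounts to $e^\top\adj(xI-A)e\equiv n\,\chi(A;x)/x$. Combined with $\chi=x^{\varepsilon}g^2$, this is exactly the paper's identity $e^\top\adj(xI+A)e=\chi'(A;x)$.
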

Theorem \ref{samepoly} shows that the square-root polynomial 
	provides a unified framework for both odd and even prime factors. To show Theorem \ref{samepoly}, 
	we first establish a simple connection between the characteristic polynomial of $A+J$ and the formal derivative of the characteristic polynomial over $\F_2$.
	\begin{lemma} \label{de}
		Let $A$ be the adjacency matrix of a simple graph. Then, over $\F_2$,
		\begin{equation*}
	\chi(A+J;x) = \chi(A;x) + \chi'(A;x).
		\end{equation*}
	\end{lemma}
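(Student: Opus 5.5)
The identity will come from Lemma \ref{mdl} combined with a derivative formula for characteristic polynomials. By Lemma \ref{mdl} (already used in the proof of Lemma \ref{pG}),
\begin{equation*}
\chi(A+J;x)=\chi(A;x)-e^\top\adj(xI-A)e .
\end{equation*}
Over $\F_2$ the minus sign is irrelevant, so it suffices to show that
\begin{equation*}
e^\top\adj(xI-A)e\equiv \chi'(A;x)\pmod 2 .
\end{equation*}

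\textbf{Step 1 (derivative of the characteristic polynomial).} Jacobi's formula, or direct differentiation of $\det(xI-A)$ row by row, gives
\begin{equation*}
\chi'(A;x)=\sum_{i=1}^n \det(xI-A)_{(i)}=\tr\adj(xI-A),
\end{equation*}
where $(xI-A)_{(i)}$ denotes the principal submatrix with row and column $i$ deleted. This identity holds over $\mathbb{Z}[x]$, so it also holds after reduction modulo $2$.

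\textbf{Step 2 (symmetry kills the off-diagonal terms).} Expand the quadratic form:
\begin{equation*}
e^\top\adj(xI-A)e=\sum_{i,j}\adj(xI-A)_{ij}.
\end{equation*}
Since $A$ is symmetric, $xI-A$ is symmetric, and hence so is its adjugate. The off-diagonal terms therefore pair up as $\adj_{ij}+\adj_{ji}=2\adj_{ij}$. This is an identity of polynomials over $\mathbb{Z}[x]$, so these pairs vanish modulo $2$, leaving
\begin{equation*}
e^\top\adj(xI-A)e\equiv\sum_i \adj(xI-A)_{ii}=\tr\adj(xI-A)\pmod 2 .
\end{equation*}
Combining this with Step 1 and Lemma \ref{mdl} gives the claim.

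\textbf{Main obstacle.} The only point needing care is to carry out the computation over $\mathbb{Z}[x]$ before reducing modulo $2$, so that both the symmetry pairing and Jacobi's formula are legitimate identities of polynomials. No other obstacle is expected. Note that the argument does not use that $A$ has zero diagonal; it works for any symmetric integer matrix.
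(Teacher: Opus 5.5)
Your proof is correct and follows essentially the same route as the paper. Both use the matrix determinant lemma to reduce the claim to $e^\top\adj(xI-A)e\equiv\tr\adj(xI-A)$, which follows from Jacobi's formula and the symmetry of the adjugate; the only cosmetic difference is that you compute over $\mathbb{Z}[x]$ and then reduce modulo $2$, whereas the paper works directly over $\F_2$ using $-A=A$.
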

	\begin{proof}
 Working over $\F_2$, we have $-A=A$, so $\chi(A;x)=\det(xI+A)$. By Jacobi's formula \cite{harville2008}, the formal derivative is \begin{equation*}
 \chi'(A;x) = \tr(\adj(xI+A)).
 \end{equation*}
 Since $xI+A$ is symmetric, its adjugate matrix $M = \adj(xI+A)$ is also symmetric.  Thus, over $\F_2$,  we have $$\tr (\adj (xI+A)) =e^\top (\adj (xI+A))e$$ and hence
\begin{equation}\label{e1}
			\chi'(A;x)=e^\top(\adj (xI+A))e.
\end{equation}
On the other hand, by  Lemma \ref{mdl}, we obtain
		\begin{equation}\label{e2}
		\chi(A+J;x)=\det(xI+A+ee^\top)=\chi(A;x)+e^\top(\adj (xI+A))e.
		\end{equation}
		Combining Eqs.~\eqref{e1} and \eqref{e2}, the lemma follows.
	\end{proof}
	
\begin{proof}[Proof of Theorem \ref{samepoly}]
We work on the field $\F_2$. Noting that $c_i=0$ over $\F_2$ for odd index $i$ by  Sachs' Coefficient Theorem \cite{cvetkovic2010}, we have
\begin{equation}\label{ch1}
	\chi(A;x)=\begin{cases}
			x^{n}+c_2	x^{n-2}+\cdots+c_{n-2}x^2+c_n&\text{if $n$ is even},\\
		x^{n}+c_2	x^{n-2}+\cdots+c_{n-3}x^3+c_{n-1}x&\text{if $n$ is odd.}
		\end{cases}
\end{equation} 
Let 
\begin{equation*}
	g(x)=\begin{cases}
		x^{\frac{n}{2}}+c_2	x^{\frac{n-2}{2}}+\cdots+c_{n-2}x+c_n&\text{if $n$ is even},\\
		x^{\frac{n-1}{2}}+c_2	x^{\frac{n-3}{2}}+\cdots+c_{n-3}x+c_{n-1}&\text{if $n$ is odd.}
	\end{cases}
\end{equation*}
Since $h^2(x)=h(x^2)$ holds for any $h(x)\in\F_2[x]$, we can rewrite  Eq.~\eqref{ch1} as
\begin{equation}\label{cg}
\chi(A;x)=\begin{cases}
	g^2(x)&\text{if $n$ is even},\\
	xg^2(x)&\text{if $n$ is odd.}
\end{cases}
\end{equation}
Consequently,  we have 
\begin{equation*}
	\chi'(A;x)=\begin{cases}
		0&\text{if $n$ is even},\\
		g^2(x)&\text{if $n$ is odd.}
	\end{cases}
\end{equation*}
It follows from Lemma \ref{de} that 
\begin{equation*}
	\chi(A+J;x)=\begin{cases}
		g^2(x)&\text{if $n$ is even},\\
		(1+x)g^2(x)&\text{if $n$ is odd}.
	\end{cases}
\end{equation*}
Thus we always have $\Phi_2(G;x)=\gcd(\chi(A;x),\chi(A+J;x))=g^2(x)$, which implies 
\begin{equation*}
\sqrtp \Phi_2(G;x)=g(x).
\end{equation*}
Combining with Eq.~\eqref{cg} leads to
\begin{equation*}
	\frac{\chi(G;x)}{\sqrtp \Phi_2(G;x)}=\begin{cases}
		g(x)&\text{if $n$ is even},\\
		xg(x)&\text{if $n$ is odd.}
	\end{cases}
\end{equation*}
This completes the proof of Theorem \ref{samepoly}.
\end{proof}
\section{Application to DGS-graph construction}
In a recent paper \cite{wang2026DAM}, Wang et al. proposed a general method to construct larger DGS-graphs using the rooted product operation.  Let $H^{(v)}$ be a rooted graph, where $v$ denotes the root vertex. For a graph $G$, the rooted product graph $G\circ H^{(v)}$ is obtained from $G$ and $n$ copies of $H$ by identifying the root of the $i$th copy of $H$  with the $i$th vertex of $G$ for each $i$. They \cite{wang2026DAM} focused on a specific family of graphs
\begin{equation*}
	\mathcal{F}_n=\{\text{an $n$-vertex graph~} G\colon\, \det A(G)=\pm 1\text{~and~}2^{-\lfloor\frac{n}{2}\rfloor}\det W(G)=\pm 1\}.
\end{equation*}
We note that $\mathcal{F}_n$ is empty when $n$ is odd as any adjacency matrix of  odd order is singular over $\F_2$ (being skew-symmetric with zero diagonal), whereas $\mathcal{F}_n$ is nonempty for all even integers $n\ge 6$, see \cite{liu2019DM}. Let $\mathcal F=\cup \mathcal{F}_n$, where $n$ runs through all even integers $n\ge 6$.
It is known that each graph in $\mathcal{F}$ is DGS by Theorem \ref{wt}.   In the same paper, Wang et al. obtained a sufficient condition for a rooted graph $H^{(v)}$ to be an \emph{ $\mathcal{F}$-preserver}, that is, to guarantee $G\circ H^{(v)}\in \mathcal{F}$ for each $G\in \mathcal{F}$. 

As an application of our main result (Theorem \ref{main}), we aim to generalize this preservation property to a larger family of graphs where the last invariant factor of $W$ is squarefree but may have odd prime factors.

	\begin{definition}\label{dg} Let $\mathcal{G}_n$ ($n$ even) be the family of all $n$-vertex graphs $G$  satisfying the following conditions:
	
	(i)  $\det A(G)=\pm 1$;
	
	(ii) the last invariant factor of $W(G)$ is square-free; and 
	
	(iii) $\deg \Phi_p(G;x)=2\nullity_p(W(G))$ for each odd prime factor $p$ of $\det W(G)$.
\end{definition} 

Let $\mathcal{G}=\cup \mathcal{G}_n$. We note that $\mathcal{F}\subset \mathcal{G}$. Indeed, for each graph $G\in \mathcal{F}$, the last invariant factor of $W(G)$ is 2 and hence clearly satisfies the last two conditions of Definition \ref{dg}. 

The following lemma gives some equivalent characterizations of the last condition of Definition \ref{dg}.

\begin{lemma}\label{equiv3}
	Let $G$ be an  $n$-vertex  controllable graph. Then for each odd prime factor of $\det W$, the following conditions are equivalent:
	
	\textup{(i)} $\deg \Phi_p(G;x)=2\nullity_p W$;
	
	\textup{(ii)} $\N(W^\top)=\rad \N(W^\top)$, i.e., each pair of vectors in $\N(W^\top)$ is orthogonal; and
	
	\textup{(iii)} $\Phi_p(G;x)$ is a perfect square in $\F_p[x]$ and $\deg \sqrtp \Phi_p(G;x)=\nullity_p W$.
\end{lemma}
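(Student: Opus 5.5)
The whole argument runs through the factorization of Proposition \ref{bf}. Write $N=\N(W^\top)$, $g(x)=\chi(A|_N;x)$ and $h(x)=\chi(A|_{\rad N};x)$. Then $\Phi_p(G;x)=g(x)h(x)$ and $h\mid g$. Taking degrees gives
\begin{equation*}
\deg\Phi_p(G;x)=\dim N+\dim\rad N=\nullity_p W+\dim \rad N .
\end{equation*}
Here I use $\dim N=\nullity_p W^\top=\nullity_p W$, since $W$ is square.

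\textbf{(i)$\Leftrightarrow$(ii).} Condition (i) says $\dim\rad N=\dim N$. Because $\rad N\subseteq N$, this holds exactly when $\rad N=N$, which is (ii). Conversely, (ii) gives $\deg\Phi_p=2\dim N$, which is (i).

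\textbf{(ii)$\Rightarrow$(iii).} If $\rad N=N$, then $h=g$, so $\Phi_p(G;x)=g(x)^2$ is a perfect square in $\F_p[x]$. For a perfect square $f=\prod f_i^{2a_i}$ we have $\sqrtp(f)=\prod f_i^{a_i}$, the honest square root. Hence $\sqrtp\Phi_p(G;x)=g$, and $\deg g=\dim N=\nullity_p W$.

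\textbf{(iii)$\Rightarrow$(i).} If $\Phi_p$ is a perfect square, every exponent $e_i$ is even. Then $\deg\sqrtp\Phi_p=\tfrac12\deg\Phi_p$ exactly. Combined with $\deg\sqrtp\Phi_p=\nullity_p W$, this gives $\deg\Phi_p=2\nullity_p W$.

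Nothing here seems hard. The only points needing care are that $N$ has dimension $\nullity_p W$ and that $\sqrtp$ of a perfect square is its ordinary square root. Neither controllability nor oddness of $p$ appears to be needed for the equivalence itself; they just fix the context in which the statement is used.
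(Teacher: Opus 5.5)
Your proof is correct and follows the paper's argument essentially verbatim. Both take degrees in the factorization of Proposition~\ref{bf} to get $\deg\Phi_p(G;x)=\dim\N(W^\top)+\dim\rad\N(W^\top)\le 2\nullity_p W$, with equality exactly when $\rad\N(W^\top)=\N(W^\top)$; (iii) then follows from $\Phi_p(G;x)=(\chi(A|_{\N(W^\top)};x))^2$, and (i) is recovered from $\deg\Phi_p=2\deg\sqrtp\Phi_p$, while your remark that controllability and the oddness of $p$ play no role in the equivalence is also accurate for the paper's proof.
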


\begin{proof}
	By Lemma \ref{bf}, we have	\begin{equation}\label{fac3}
		\Phi_p(G;x)=\chi(A|_{\N(W^\top)};x)\chi(A|_{\rad \N(W^\top)};x),
	\end{equation} 
	which implies 
\begin{equation*}
\deg \Phi_p(G;x)=\dim \N(W^\top)+\dim \rad\N(W^\top)\le 2\dim \N(W^\top)= 2\nullity_p W.
	\end{equation*}
Noting that the non-strict inequality becomes an equality if and only if  $\N(W^\top)=\rad \N(W^\top)$, the equivalence of (i) and (ii) follows.  Now, suppose (ii) holds. Then Eq.~\eqref{fac3}  reduces to 
\begin{equation*}
	\Phi_p(G;x)=(\chi(A|_{\N(W^\top)};x))^2.
\end{equation*} 
Thus, $\Phi_p(G;x)$ is a perfect square and 
 $\deg  \sqrtp \Phi_p(G;x)=\deg \chi(A|_{\N(W^\top)};x)=\nullity_p W$. This proves (ii)$\implies$(iii). Finally, if (iii) holds, then we have 
 $$\deg \Phi_p(G;x)=2\deg \sqrtp \Phi_p(G;x)=2\nullity_p W.$$ Thus, (iii)$\implies$(i). This completes the proof.	
\end{proof}
The following corollary is immediate from Theorem \ref{main} and Lemma \ref{equiv3}. 
\begin{corollary}
Each graph in $\mathcal{G}$ is DGS.
\end{corollary}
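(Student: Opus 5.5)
The claim is that every $G\in\mathcal{G}$ is DGS. The plan is to check the hypotheses of Theorem \ref{main} for each $G\in\mathcal{G}_n$ and then apply it.

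\emph{Condition (ii) of Definition \ref{dg}.} This states that the last invariant factor $d_n$ of $W(G)$ is square-free. In particular $d_n\neq 0$, so $W(G)$ is non-singular and $G$ is controllable. This is exactly the first hypothesis of Theorem \ref{main}. It also makes Lemma \ref{equiv3} available, since that lemma is stated for controllable graphs.

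\emph{Odd primes dividing $d_n$.} Let $p$ be an odd prime factor of $d_n$. Since $d_n\mid\det W(G)$, the prime $p$ also divides $\det W(G)$. Condition (iii) of Definition \ref{dg} then gives $\deg\Phi_p(G;x)=2\nullity_p W$. This is condition (i) of Lemma \ref{equiv3}, and the lemma upgrades it to condition (iii) there. In particular $\deg\sqrtp\Phi_p(G;x)=\nullity_p W$, which is precisely Eq.~\eqref{keyequ2}.

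\emph{Conclusion.} Every hypothesis of Theorem \ref{main} is now satisfied, so $G$ is DGS.

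Condition (i) of Definition \ref{dg}, $\det A(G)=\pm1$, plays no role here; it is presumably needed later for the rooted-product preservation result. There is no real obstacle in this argument. The only point needing care is that Definition \ref{dg}(iii) is phrased for prime factors of $\det W$ while Theorem \ref{main} asks about prime factors of $d_n$, and every prime factor of $d_n$ divides $\det W$, so the hypothesis covers what is needed.
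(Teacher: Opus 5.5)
Your proposal is correct and matches the paper's argument: the paper says the corollary is immediate from Theorem \ref{main} together with the implication (i)$\Rightarrow$(iii) of Lemma \ref{equiv3}, which is what you verify. Your extra checks, that square-freeness of $d_n$ forces $W(G)$ to be non-singular and that every odd prime factor of $d_n$ divides $\det W$, are the details the paper leaves implicit.
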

Let $H^{(v)}$ be a rooted graph of order $m$ with root $v$. Let $B(\lambda)=A(H)+\lambda D_v$, where $D_v=\diag(0,\ldots,0,1,0\ldots,0)$ with exactly one nonzero entry (which is 1) corresponding to the root $v$. For two polynomials $f(x)$ and $g(x)$, we use $\Res(f(x),g(x))$ to denote the resultant of $f(x)$ and $g(x)$. We need the following two formulas of  $\det A(G\circ H^{(v)})$ and $\det W(G\circ H^{(v)})$.
\begin{lemma}[\cite{wang2026DAM}]\label{da}
	$\det A(G\circ H^{(v)})=\det((\det A(H))\cdot I_n+(\det A(H-v))\cdot A(G)).$
\end{lemma}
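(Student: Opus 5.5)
The plan is to compute the determinant of the block matrix $A(G\circ H^{(v)})$ by a Schur-complement elimination of the non-root vertices of each copy of $H$. This works cleanly over the rational function field, and the general case is then recovered by continuity (polynomial identity).

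Order the vertices so that the $n$ roots come first, followed by the $n$ copies of $H-v$. Write $A(H)$ in block form with respect to the root:
\[
A(H)=\begin{pmatrix}0 & b^\top\\ b & C\end{pmatrix},
\]
where $C=A(H-v)$ and $b$ is the adjacency vector of the root into $H-v$. Then
\[
A(G\circ H^{(v)})=\begin{pmatrix} A(G) & I_n\otimes b^\top\\ I_n\otimes b & I_n\otimes C\end{pmatrix}.
\]

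First assume $\det C\neq 0$. The Schur complement formula gives
\[
\det A(G\circ H^{(v)})=\det(I_n\otimes C)\cdot\det\bigl(A(G)-(b^\top C^{-1}b)I_n\bigr)=(\det C)^n\det\bigl(A(G)-(b^\top C^{-1}b)I_n\bigr).
\]
Applying the same Schur complement to $A(H)$ itself gives $\det A(H)=\det C\cdot(0-b^\top C^{-1}b)$, so
\[
b^\top C^{-1}b=-\frac{\det A(H)}{\det A(H-v)}.
\]
Substituting this and absorbing $(\det C)^n$ into the $n\times n$ determinant yields
\[
\det\bigl((\det C)A(G)+(\det A(H))I_n\bigr),
\]
which is the claimed formula.

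To remove the hypothesis $\det C\neq 0$, replace $C$ by $C+tI$, i.e., replace $A(H)$ by $A(H)+t\,(I-D_v)$ with the indeterminate $t$ placed on the non-root diagonal. Both sides of the identity are then polynomials in $t$, and they agree for all $t$ with $\det(C+tI)\neq0$, which is all but finitely many $t$. Hence they agree identically, and setting $t=0$ gives the result.

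The only delicate step is this genericity argument. One must check that the derivation used no property of $C$ beyond invertibility (symmetry and the zero diagonal are not needed), so the perturbed identity is legitimate. The rest is routine block bookkeeping with Kronecker products.
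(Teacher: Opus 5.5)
The paper gives no proof of this lemma (it is quoted from \cite{wang2026DAM}), so there is no in-text argument to match. Your argument is correct and complete as it stands.

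The individual steps all check out:
the block form with the roots listed first is right; $(I_n\otimes b^\top)(I_n\otimes C)^{-1}(I_n\otimes b)=(b^\top C^{-1}b)I_n$; and the same Schur complement applied to $A(H)$ gives $b^\top C^{-1}b=-\det A(H)/\det A(H-v)$. The perturbation $C\mapsto C+tI$, which leaves the root's diagonal entry equal to $0$, is legitimate. This is because $\det(C+tI)$ is monic in $t$, and nothing in the computation used that $C$ has zero diagonal.

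The route suggested by the paper's own notation is different. With the labelling $A(G\circ H^{(v)})=A(H)\otimes I_n+D_v\otimes A(G)$ from Lemma~\ref{wak}, write $A(G)=P\,\diag(\lambda_1,\ldots,\lambda_n)P^\top$ with $P$ orthogonal. Conjugating by $I_m\otimes P$ gives a matrix that is permutation-similar to the direct sum of the matrices $B(\lambda_i)=A(H)+\lambda_iD_v$. Since $\lambda D_v$ is a rank-one update supported on the $(v,v)$ entry, Lemma~\ref{mdl} (or linearity in row $v$) gives $\det B(\lambda)=\det A(H)+\lambda\det A(H-v)$. Taking the product over $i$ yields the right-hand side.

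Comparing the two:
\begin{itemize}
\item The diagonalization argument needs no invertibility of $A(H-v)$, hence no genericity step. It also produces exactly the matrices $B(\lambda)$ that drive Lemma~\ref{dw}. It does, however, use the diagonalizability of $A(G)$.
\item Your Schur-complement argument never uses any property of $A(G)$. It therefore works verbatim with $A(G)$ replaced by an arbitrary square matrix, at the price of the perturbation argument.
\end{itemize}
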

\begin{lemma}[\cite{wang2026DAM}]\label{dw}
We have
\begin{equation*}
	\det W(G\circ H^{(v)})=\pm \left(\Res(\chi(H;x),\chi(H-v;x))\right)^{\frac{n(n-1)}{2}}\cdot\det h(A(G))\cdot\left(\det W(G)\right)^m,
\end{equation*}
where $h(\cdot)$ is a polynomial defined by $h(\lambda)=\det[e,B(\lambda)e,\ldots,(B(\lambda))^{m-1}e]$. 
\end{lemma}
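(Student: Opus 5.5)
The plan is to block-diagonalize $A(G\circ H^{(v)})$ using the spectral decomposition of $A(G)$. Then factor the resulting Krylov determinant into a per-block part and a resultant part.

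\textbf{Step 1: block-diagonalize.} Order the vertices of $G\circ H^{(v)}$ as pairs $(i,u)$ with $i\in V(G)$ and $u\in V(H)$. Then
\begin{equation*}
A(G\circ H^{(v)})=A(G)\otimes D_v+I_n\otimes A(H),
\end{equation*}
and the all-ones vector is $e_n\otimes e_m$.

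Write $A(G)=P\Lambda P^\top$ with $P$ real orthogonal and $\Lambda=\diag(\lambda_1,\ldots,\lambda_n)$. Set $c=P^\top e_n$. Conjugating by $P\otimes I_m$, whose determinant is $\pm1$, gives
\begin{equation*}
(P\otimes I_m)^\top A(G\circ H^{(v)})(P\otimes I_m)=\bigoplus_{i=1}^n B(\lambda_i),
\end{equation*}
and the starting vector becomes $\bigoplus_i c_i e_m$. Hence, up to sign, $\det W(G\circ H^{(v)})$ is the determinant of the Krylov matrix of the block-diagonal matrix $M=\bigoplus_i B(\lambda_i)$ with starting vector $w=\bigoplus_i c_ie_m$, of length $nm$.

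\textbf{Step 2: Krylov determinant of a direct sum.} This is the step I expect to be the main obstacle. The general claim is as follows. Let $M=\bigoplus_i M_i$, where each $M_i$ has size $m$ and starting vector $w_i$, and let $K_i=[w_i,M_iw_i,\ldots,M_i^{m-1}w_i]$. Then
\begin{equation*}
\det[w,Mw,\ldots,M^{nm-1}w]=\pm\prod_i\det K_i\cdot\prod_{i<j}\Res(\chi(M_i;x),\chi(M_j;x)).
\end{equation*}
The proof I would give runs in three moves.

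1. Wherever $K_i$ is invertible, $K_i^{-1}M_iK_i$ is the companion matrix $C_i$ of $\chi_i=\chi(M_i;x)$, and $K_i^{-1}w_i=e_1$.
2. After multiplying by $\diag(K_i^{-1})$, the remaining matrix has $i$-th block row $[e_1,C_ie_1,\ldots,C_i^{nm-1}e_1]$. Its $k$-th column is the coefficient vector of $x^k\bmod \chi_i$.
3. Hence this matrix represents the Chinese-remainder map $\F[x]_{<nm}\to\bigoplus_i\F[x]/(\chi_i)$. Its determinant is the classical $\pm\prod_{i<j}\Res(\chi_i,\chi_j)$; this is the Sylvester-type, or generalized Vandermonde, identity, which I would verify on the generic case of distinct roots.

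The singular cases follow because both sides are polynomial identities in the entries. Applying the claim with $w_i=c_ie_m$ and $M_i=B(\lambda_i)$ gives $\det K_i=c_i^m h(\lambda_i)$.

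\textbf{Step 3: evaluate the resultants.} Expand $\det(xI-B(\lambda))$ along the root row and column. With $f=\chi(H;x)$ of degree $m$ and $g=\chi(H-v;x)$ of degree $m-1$, this gives
\begin{equation*}
\chi(B(\lambda);x)=f-\lambda g.
\end{equation*}
Since $f-\lambda_jg=(f-\lambda_ig)+(\lambda_i-\lambda_j)g$, the resultant is unchanged by this reduction modulo $f-\lambda_i g$, and so
\begin{equation*}
\Res(f-\lambda_ig,f-\lambda_jg)=\pm(\lambda_i-\lambda_j)^{m}\Res(f-\lambda_ig,g)=\pm(\lambda_i-\lambda_j)^m\Res(f,g).
\end{equation*}
Here $\Res(f-\lambda_i g,g)=\pm\Res(f,g)$ because subtracting a multiple of $g$ does not change the values of the first argument at the roots of $g$.

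\textbf{Step 4: assemble.} Taking the product over $i<j$ contributes $\Res(f,g)^{n(n-1)/2}\prod_{i<j}(\lambda_i-\lambda_j)^m$. The same computation applied to $G$ alone (the case $m=1$) is the Vandermonde identity
\begin{equation*}
\det W(G)=\pm\prod_i c_i\prod_{i<j}(\lambda_j-\lambda_i),
\end{equation*}
so $\prod_i c_i^m\prod_{i<j}(\lambda_i-\lambda_j)^m=\pm(\det W(G))^m$. Finally, $\prod_i h(\lambda_i)=\det h(A(G))$. Combining these three facts yields the stated formula.

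Repeated eigenvalues of $A(G)$ make both sides vanish, or can be handled by treating the identity as a polynomial identity in a generic symmetric matrix. So no distinctness assumption is needed.
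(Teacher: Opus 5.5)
Your proof is correct. The paper gives no proof of this lemma: it is quoted from \cite{wang2026DAM} and used as a black box in Theorem~\ref{gp}. So there is no in-paper argument to compare against, and your proposal supplies a complete, self-contained derivation.

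The structure works as follows. Diagonalizing $A(G)$ turns $A(G\circ H^{(v)})$ into $\bigoplus_i B(\lambda_i)$ with starting vectors $c_ie_m$. Your direct-sum Krylov identity then splits $\det W(G\circ H^{(v)})$ into two kinds of factors:
\begin{itemize}
\item the block factors $c_i^m h(\lambda_i)$, which give $\det h(A(G))$ and the $\prod_i c_i^m$ part of $(\det W(G))^m$;
\item the cross-block resultants.
\end{itemize}
The single observation $\chi(B(\lambda);x)=\chi(H;x)-\lambda\chi(H-v;x)$ makes each cross resultant equal to $\pm(\lambda_i-\lambda_j)^m\Res(\chi(H;x),\chi(H-v;x))$. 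This produces both the power $\frac{n(n-1)}{2}$ of the resultant and the Vandermonde factor that completes $(\det W(G))^m$.

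Steps 2--4 are valid for arbitrary real $\lambda_i$, including repeated eigenvalues and $c_i=0$, so your closing remark about repeated eigenvalues is not needed.

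One point needs tightening: the density argument in Step 2. Your identity is stated only up to $\pm$, so to extend it from the generic case you should do one of two things. Either note that the sign is a fixed function of $n$ and $m$, since it comes from the orientation of the Vandermonde. Or compare the squares of both sides, which are genuine polynomial identities in the entries of the $M_i$ and $w_i$.
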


Following \cite{wang2026DAM}, for a rooted graph $H^{(v)}$, we consider the following three  conditions:
\begin{itemize}
	\item[(C1)] either $\det A(H)=\pm 1$ and $\det A(H-v)=0$, or $\det A(H)=0$ and $\det A(H-v)=\pm 1$;
	\item[(C2)] $\Res(\chi(H;x), \chi(H-v;x))=\pm 1$;
	\item[(C3)] $\det[e, B(\lambda)e, \ldots, (B(\lambda))^{m-1}e]=\pm \lambda^{k}$ for some $k\ge 0$.
\end{itemize}

As a direct application of Lemmas \ref{da} and \ref{dw}, Wang et al. \cite{wang2026DAM} showed that each rooted graph $H^{(v)}$ satisfying the above three conditions is necessarily an $\mathcal{F}$-preserver. The main aim of this section is to show that $H^{(v)}$ is also a $\mathcal{G}$-preserver under the same conditions. Since $\mathcal{G}$ is a larger DGS-family than $\mathcal{F}$, it gives many new constructions of DGS-graphs.
\begin{theorem}\label{gp}
	Let $H^{(v)}$ be a rooted graph satisfying (C1), (C2) and (C3), simultaneously. Then $H^{(v)}$ is an $\mathcal{G}$-preserver, i.e., $G\circ H^{(v)}\in \mathcal{G}$ for any $G\in\mathcal{G}$. 
\end{theorem}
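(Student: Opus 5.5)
The plan is to verify the three conditions of Definition \ref{dg} for $\Gamma=G\circ H^{(v)}$, which has $N=mn$ vertices (an even number, since $n$ is even). The idea is to show that $\det W(\Gamma)=\pm(\det W(G))^m$ and then to exhibit, for each odd prime $p$, an explicit totally isotropic subspace of $\N(W(\Gamma)^\top)$ that is large enough to force every inequality in sight to be an equality.

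\emph{Condition (i).} Lemma \ref{da} together with (C1) gives $\det A(\Gamma)=\det(\pm I_n)=\pm1$ or $\det A(\Gamma)=\det(\pm A(G))=\pm1$, since $\det A(G)=\pm1$.

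\emph{The determinant of $W(\Gamma)$.} By (C3), $h(\lambda)=\pm\lambda^k$, so $\det h(A(G))=\pm(\det A(G))^k=\pm1$. Combined with (C2), Lemma \ref{dw} yields $\det W(\Gamma)=\pm(\det W(G))^m\neq0$. Hence $\Gamma$ is controllable, and for every prime $p$ we have $v_p(\det W(\Gamma))=m\,v_p(\det W(G))$. Since the last invariant factor of $W(G)$ is square-free, every nonunit invariant factor of $W(G)$ has $p$-adic valuation $1$, so $v_p(\det W(G))=\nullity_p W(G)$. For any nonsingular integer matrix $M$ one has $v_p(\det M)\ge\nullity_p M$, with equality if and only if $p^2\nmid d_n(M)$. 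Condition (ii) for $\Gamma$ therefore reduces to proving $\nullity_p W(\Gamma)\ge m\,\nullity_p W(G)$ for every prime $p\mid\det W(G)$.

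\emph{Odd primes $p$.} Order the vertices of $\Gamma$ so that $A(\Gamma)=A(G)\otimes D_v+I_n\otimes A(H)$ and $e_{N}=e_n\otimes e_m$. The subspace $\col W(G)\otimes\F_p^m$ is $A(\Gamma)$-invariant, because
\[
A(\Gamma)(u\otimes w)=A(G)u\otimes D_vw+u\otimes A(H)w .
\]
It also contains $e_N$. Consequently $\col W(\Gamma)\subseteq \col W(G)\otimes\F_p^m$ over $\F_p$, and taking orthogonal complements gives
\[
\N(W(G)^\top)\otimes\F_p^m\subseteq\N(W(\Gamma)^\top).
\]
The left-hand side has dimension $m\,\nullity_p W(G)$, which yields the required inequality. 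Moreover,
\[
(y\otimes z)^\top(y'\otimes z')=(y^\top y')(z^\top z')=0,
\]
because $\N(W(G)^\top)$ is totally isotropic; this is Lemma \ref{equiv3} applied to $G\in\mathcal G$. The chain
\[
m\,\nullity_pW(G)\le\nullity_pW(\Gamma)\le v_p(\det W(\Gamma))=m\,\nullity_pW(G)
\]
forces equality throughout. So $p^2\nmid d_N(W(\Gamma))$, and $\N(W(\Gamma)^\top)$ coincides with this isotropic subspace. Lemma \ref{equiv3}((ii)$\Rightarrow$(i)) then gives condition (iii) for $\Gamma$.

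\emph{The prime $p=2$.} This is the step I expect to need the most care, because the tensor argument only gives $\nullity_2 W(\Gamma)\ge mn/2$, and isotropy is not needed here. I would instead use the known fact (from \cite{wang2013EJC}, reflected in Theorem \ref{wt2}) that $\nullity_2 W(\Gamma)\ge\lfloor N/2\rfloor=mn/2$ for every graph. For $G$, square-freeness of its last invariant factor together with $2^{n/2}\mid\det W(G)$ gives $v_2(\det W(G))=\nullity_2 W(G)=n/2$. Hence $v_2(\det W(\Gamma))=mn/2\le\nullity_2 W(\Gamma)$, and the general inequality $v_2(\det)\ge\nullity_2$ gives equality, so $4\nmid d_N(W(\Gamma))$. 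Primes not dividing $\det W(G)$ do not divide $\det W(\Gamma)$. Altogether, the last invariant factor of $W(\Gamma)$ is square-free, and $\Gamma\in\mathcal G_{mn}$.
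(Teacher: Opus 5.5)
Your argument follows the paper's proof closely. You get condition (i) from Lemma \ref{da}, and $\det W(\Gamma)=\pm(\det W(G))^m$ from Lemma \ref{dw} with (C2)--(C3). For odd $p$ you use the squeeze $m\,\nullity_p W(G)\le\nullity_p W(\Gamma)\le v_p(\det W(\Gamma))=m\,\nullity_p W(G)$ via the Kronecker subspace $\N(W(G)^\top)\otimes\F_p^m$, whose isotropy then feeds Lemma \ref{equiv3}. You prove the inclusion through $A(\Gamma)$-invariance of $\col W(G)\otimes\F_p^m$. This is a slightly cleaner dual of the paper's Lemma \ref{wak}, which expands $\tilde A^k=\sum_r M_{k,r}\otimes A^r$ instead.

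There is, however, an unjustified step at $p=2$.
\begin{itemize}
\item Square-freeness of $d_n(W(G))$ gives $v_2(\det W(G))=\nullity_2 W(G)$.
\item The divisibility $2^{n/2}\mid\det W(G)$ gives only $\nullity_2 W(G)\ge n/2$.
\item Neither yields the upper bound $\nullity_2 W(G)\le n/2$ that you need to conclude $v_2(\det W(\Gamma))=mn/2$.
\end{itemize}
If $\nullity_2 W(G)=s>n/2$, then $v_2(\det W(\Gamma))=ms$, and the general bound $\nullity_2 W(\Gamma)\ge mn/2$ would not force equality. The paper closes this by appealing to \cite[Theorem~16]{wang2021LAA}: $v_2(d_n)=1$ forces $v_2(\det W)=\lfloor n/2\rfloor$.

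You don't actually need that theorem. Your invariant-subspace argument never uses that $p$ is odd, so over $\F_2$ it gives
\[
\nullity_2 W(\Gamma)\ge m\,\nullity_2 W(G)=m\,v_2(\det W(G))=v_2(\det W(\Gamma)).
\]
This forces $4\nmid d_N(W(\Gamma))$, exactly as in the odd case. The detour comes from your remark that ``the tensor argument only gives $\nullity_2 W(\Gamma)\ge mn/2$''. In fact it gives $\ge m\,\nullity_2 W(G)$, which is precisely what is required. With this fix, $p=2$ is handled by the same squeeze as the odd primes, just without isotropy. You then need neither the external theorem nor the rank bound $\nullity_2 W\ge\lfloor N/2\rfloor$, which is a small simplification over the paper's argument.
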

\begin{lemma} \label{wak}
	Let $W$ and $\tilde{W}$ be the walk matrices of $G$ and $G\circ H^{(v)}$, respectively. Let $p$ be an odd prime factor of $\det W(G)$.  If $\alpha\in \N(W^\top) \subset \F_p^n$, then $u\otimes \alpha\in \N(\tilde{W}^\top)$ for any $u\in \F_p^m$. \end{lemma}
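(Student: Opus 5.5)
We first fix the vertex ordering of $G\circ H^{(v)}$ so that the adjacency matrix takes a Kronecker form. We index vertices by pairs $(i,j)$, with $i\in V(G)$ and $j\in V(H)$, and order them so that the vector index is $u\otimes\alpha$ with $u\in\F_p^m$ indexing $H$ and $\alpha\in\F_p^n$ indexing $G$. Then
\begin{equation*}
\tilde A = A(G\circ H^{(v)}) = A(H)\otimes I_n + D_v\otimes A(G),
\end{equation*}
and $\tilde e=e_m\otimes e_n$. Since $\N(\tilde W^\top)=(\col\tilde W)^\perp$, the claim $u\otimes\alpha\in\N(\tilde W^\top)$ amounts to
\begin{equation*}
(u\otimes\alpha)^\top\tilde A^k(e_m\otimes e_n)=0 \quad\text{for all } k\ge 0 .
\end{equation*}

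The key step is to show that $\tilde A^k(e_m\otimes e_n)$ lies in the subspace $\F_p^m\otimes\col W$ for every $k$. Let $V=\col W=\operatorname{span}\{e,Ae,A^2e,\ldots\}\subset\F_p^n$, which is $A$-invariant. The subspace $S=\F_p^m\otimes V$ contains $e_m\otimes e_n$. It is also $\tilde A$-invariant:
\begin{equation*}
\tilde A(x\otimes y)=A(H)x\otimes y+D_vx\otimes A(G)y ,
\end{equation*}
and both terms lie in $S$ whenever $y\in V$. Hence $\tilde A^k\tilde e\in S$ for all $k$, which is to say $\col\tilde W\subset \F_p^m\otimes V$.

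It then remains to check orthogonality. For $x\in\F_p^m$ and $y\in V$,
\begin{equation*}
(u\otimes\alpha)^\top(x\otimes y)=(u^\top x)(\alpha^\top y)=0,
\end{equation*}
because $\alpha\in\N(W^\top)=V^\perp$. By linearity $u\otimes\alpha\perp S\supset\col\tilde W$, so $u\otimes\alpha\in\N(\tilde W^\top)$.

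The main obstacle is only the bookkeeping of the Kronecker ordering: the statement's $u\otimes\alpha$ must match the chosen indexing of $\tilde A$. If the paper's convention instead puts $G$ in the first factor, the same argument applies verbatim with $\tilde A=I_n\otimes A(H)+A(G)\otimes D_v$ and the subspace $V\otimes\F_p^m$. The hypothesis that $p$ is odd and divides $\det W$ is not actually needed here; it only guarantees that $\N(W^\top)$ is nontrivial.
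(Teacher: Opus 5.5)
Your proof is correct and follows essentially the same route as the paper: both rest on the fact that $\tilde A^k(e_m\otimes e_n)$ is a linear combination of vectors $x\otimes A^r e_n$, each orthogonal to $u\otimes\alpha$ because $\alpha^\top A^r e_n=0$. The paper gets this by expanding $\tilde A^k=\sum_{r} M_{k,r}\otimes A^r$, while you get it more cleanly from the $\tilde A$-invariance of $\F_p^m\otimes\col W$; your remark that neither the parity of $p$ nor $p\mid\det W$ is needed is also right.
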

\begin{proof}
	By labeling the vertices of $G\circ H^{(v)}$ appropriately, we may write
	\begin{equation*}
		A(G\circ H^{(v)})=A(H)\otimes I_n+D_v\otimes A(G).
	\end{equation*}
We regard	$A(G\circ H^{(v)})$ as a $m\times m$ block matrix $(C_{ij})$ with each block  $C_{ij}\in \{A(G),I,O\}$, where $O$ is the zero matrix (of order $n$). Let $A=A(G)$ and $\tilde{A}=A(G\circ H^{(v)})$. As $A$, $I$ and  $O$ are clearly pairwise commutative, we find from the product of block matrices that for each $k\ge 0$, the $k$-th power $\tilde{A}$ has the form 
\begin{equation}\label{ak}
	\tilde{A}^k=(f_{ij}(A))_{m\times m}
\end{equation}
	where $f_{ij}$ (depending on $k$) is a polynomial  with integer coefficients and has degree at most $k$.
Thus, using the form of Kronecker products, we may rewrite  Eq.~\eqref{ak} as
\begin{equation*}
\tilde{A}^k = \sum_{r=0}^k M_{k,r} \otimes A^r.
\end{equation*} Since $\alpha \in \N(W^\top)$, we have $\alpha^\top A^r e_n = 0$ for all $r \ge 0$. Using some standard properties of Kronecker products, we obtain
	\begin{equation*}
		e_{mn}^\top \tilde{A}^k (u \otimes \alpha) = (e_m^\top \otimes e_n^\top) \left( \sum_{r=0}^k M_{k,r} \otimes A^r \right) (u \otimes \alpha) = \sum_{r=0}^k (e_m^\top M_{k,r} u) \left( e_n^\top A^r \alpha \right) = 0.
	\end{equation*}
This implies $(u \otimes \alpha)^\top \tilde{W} = 0$, i.e., $u \otimes \alpha\in \N(\tilde{W}^\top)$. The proof is completed.
\end{proof}
\begin{proof}[Proof of Theorem \ref{gp}] We prove the theorem by establishing that $G\circ H^{(v)}$ satisfies the three conditions of Definition \ref{dg}. By Lemma \ref{da} and the assumptions on the determinants of three matrices $A(G)$, $A(H)$, and $A(H-v)$, we easily obtain that $\det A(G\circ H^{(v)})=\pm 1$. We proceed to check the second condition of Definition \ref{dg} for the rooted product $G\circ H^{(v)}$. 
	
	Let $d$ and $\tilde{d}$ be the last invariant factors of $W=W(G)$ and $\tilde{W}=W(G\circ H^{(v)})$. As $d$ is square-free, we have $v_2(d)=1$ and hence $v_2(\det W)=\lfloor{\frac{n}{2}}\rfloor=\frac{n}{2}$ as $n$ is even; see \cite[Theorem 16]{wang2021LAA}.  Let
	\begin{equation*}
		\det W=\pm 2^{\frac{n}{2}}p_1^{t_1}p_2^{t_2}\cdots p_s^{t_s}
	\end{equation*}
	be the factorization of $\det W$, where $p_i$'s are distinct odd primes and $t_i$'s are positive integers. It follows from Lemma \ref{dw}, together with the conditions on $G$ and $H^{(v)}$, that 
		\begin{equation}\label{dhw}
		\det \tilde{W}=\pm\left(\det W(G)\right)^m=\pm 2^{\frac{mn}{2}}p_1^{mt_1}p_2^{mt_2}\cdots p_s^{mt_s}.
	\end{equation}
Since at least $\frac{mn}{2}$ invariant factors of $\tilde{W}$ are even,  the fact $v_2(\det \tilde{W})=\frac{mn}{2}$ from Eq.~\eqref{dhw}  implies that $v_2(\tilde{d})=1$. Thus, to show that $\tilde{d}$ is square-free, it suffices to show $v_{p_i}(\tilde{d})=1$ for each $i\in\{1,2,\ldots,s\}$.

For simplicity of notation, we fix an $i$ in $\{1,2,\ldots,s\}$ and write $p=p_i$ and $t=t_i$. As $v_p(d)=1$ and $v_p(\det W)=t$, we see that exactly $t$ invariant factors are multiples of $p$, i.e., $\nullity_p(W)=t$. 
Let $V=\F_p^m\otimes \N(W^\top)$. Then we have 
\begin{equation*}
\dim V=\dim \F_p^m\times \dim \N(W^\top)=m\times\nullity_p W=mt.
\end{equation*}
By 	Lemma \ref{wak}, we find that $\N(\tilde{W}^\top)\supset \F_p^m\otimes \N(W^\top)=V$ and hence $\dim \N(\tilde{W}^\top)\ge mt$. On the other hand, by Eq.~\eqref{dhw}, we see that $v_p(\det \tilde{W})=mt$ and hence $\nullity_p \tilde{W}\le mt$ with equality holding if and only if $v_p(\tilde{d})=1$. Noting that $\dim \N(\tilde{W})=\nullity_p \tilde{W}$, we must have $\nullity_p \tilde{W}=mt$ and hence $v_p(\tilde{d})=1$. This proves that $\tilde{d}$ is square-free. Furthermore, as $\dim \N(\tilde{W}^\top)\ge mt=\dim V$ and $\N(\tilde{W}^\top)\supset V$, we see that the two spaces  $\N(\tilde{W}^\top)$ and $V$ are equal.

We claim that any two vectors in the subspace $V\subset \F_p^{mn}$ are orthogonal.  	Take any two vectors $x = u \otimes \alpha$ and $y = w \otimes \beta$ in $V$. Their inner product is $x^\top y = (u^\top w) (\alpha^\top \beta)$. By Lemma \ref{equiv3}(ii), we have $\alpha^\top \beta = 0$. Thus $x^\top y = 0$ and hence the claim follows by bilinearity.  Note that $\N(\tilde{W}^\top)=V$. It follows from Lemma \ref{equiv3} that $\deg \Phi_p(G\circ H^{(v)};x)=2\nullity_p\tilde{W}$, verifying the last condition of Definition \ref{dg} for the graph $G\circ H^{(v)}$. This completes the proof of Theorem \ref{gp}.
\end{proof}
\begin{example} \normalfont
Let  $G$ be an $8$-vertex graph whose adjacency matrix is
\begin{equation*}
	A=\begin{pmatrix}
		0 & 1 & 0 & 0 & 1 & 1 & 1 & 0 \\
		1 & 0 & 1 & 0 & 1 & 1 & 0 & 1 \\
		0 & 1 & 0 & 0 & 1 & 0 & 0 & 0 \\
		0 & 0 & 0 & 0 & 1 & 1 & 0 & 0 \\
		1 & 1 & 1 & 1 & 0 & 0 & 1 & 1 \\
		1 & 1 & 0 & 1 & 0 & 0 & 1 & 1 \\
		1 & 0 & 0 & 0 & 1 & 1 & 0 & 0 \\
		0 & 1 & 0 & 0 & 1 & 1 & 0 & 0 \\
	\end{pmatrix}.
\end{equation*}
A direct calculation shows that  $\det A(G)=1$, $\det W=-2^4\times 3$. Moreover, for the odd prime $p=3$, we obtain $\Phi_3(G;x)=(x+1)^2$ and hence the condition $\deg\Phi_3(G;x)=2\nullity_3 W(G)$ is verified.  Thus, $G$ belongs to the family $\mathcal{G}$ introduced in Definition \ref{dg}. 

Let $P_m$ be the path of order $m$ whose vertices are labeled naturally as $1,2,\ldots,m$. Consider a sequence of graphs $\{G_m\}$ defined by $G_m=G\circ P_m^{(1)}$.  It has been shown essentially \cite{wang2024LMA} that $P_m^{(1)}$ satisfies (C1), (C2) and (C3) simultaneously. Thus, by Theorem \ref{gp}, each graph $G_m$ belongs to $\mathcal{G}$ and hence is DGS.  Notably, the DGS-property of some $G_m$ relies heavily on Theorem \ref{main}, whereas the stronger condition of Theorem \ref{main0} is not satisfied.

Consider the graph $G_4=G\circ P_4^{(1)}$. It can be computed that the SNF of $W(G_4)$ is
\begin{equation*}
	\diag[\underbrace{1,\ldots,1}_{16},\underbrace{2,\ldots,2}_{12},\underbrace{6,6,6,6}_4].
\end{equation*}
Furthermore, $\Phi_3(G_4;x)=(1+x)^8$, a perfect square in $\F_3[x]$. Consequently, we obtain 
\begin{equation*}
\deg\sqrtp\Phi_3(G_4;x)=4=\nullity_3(W(G_4))
\end{equation*}
whereas
\begin{equation*}
	\deg\sfp\Phi_3(G_4;x)=1<\nullity_3(W(G_4)).
\end{equation*}
This means that the graph $G_4$ satisfies the condition of Theorem \ref{main}, but does not satisfy the condition of Theorem \ref{main0}.  
\end{example}
\section*{Declaration of competing interest}
There is no conflict of interest.
\section*{Acknowledgments}
This work is  supported by the National Natural Science Foundation of China (Grant Nos.~12001006 and 12371357) and  Wuhu Science and Technology Project, China (Grant No.~2024kj015).

\end{document}